\numberwithin{equation}{section}
\newtheorem{theorem}{Theorem}[section]
\newtheorem{corollary}[theorem]{Corollary}
\theoremstyle{definition}
\newtheorem{example}[theorem]{Example}
\newtheorem{remark}[theorem]{Remark}
\DeclareMathOperator*{\Res}{\mathrm{Res}}
\def\abs#1{\lvert#1\rvert}
\begin{document}

\title[Barnes multiple zeta-functions]
{Barnes multiple zeta-functions, Ramanujan's formula, and relevant series involving hyperbolic functions}

\author{Yasushi Komori,\ Kohji Matsumoto and Hirofumi Tsumura}

\maketitle

\begin{abstract}
  In the former part of this paper, we give functional equations for
  Barnes multiple zeta-functions and consider some relevant results. 
  In particular, we show that Ramanujan's classical formula for the
  Riemann zeta values can be derived from functional equations for
  Barnes zeta-functions.   In the
  latter half part, we generalize some evaluation formulas of certain
  series involving hyperbolic functions in terms of Bernoulli
  polynomials. The original formulas were classically given by Cauchy,
  Mellin, Ramanujan, and later recovered and formulated by Berndt.
  From our consideration, we give multiple versions of these known
  formulas.

\noindent
{\sc{MSC Numbers}}: 11M41,\,11B68.

\end{abstract}

%%%%%%%%%%%%%%%%%%%%%%%%%%%%%%%%%%%%%%%%%%%%%%%
\section{Introduction} \label{sec-1}
%%%%%%%%%%%%%%%%%%%%%%%%%%%%%%%%%%%%%%%%%%%%%%%

Let $\mathbb{N}$ be the set of natural numbers, $\mathbb{Z}$ the ring
of rational integers, $\mathbb{Q}$ the field of rational numbers,
$\mathbb{R}$ the field of real numbers, $\mathbb{C}$ the field of
complex numbers, and $\mathbb{N}_0:=\mathbb{N}\cup\{0\}$.

We begin with the classical work of Cauchy \cite{Ca} who studied the
series defined by
\begin{equation}
  \sum_{m \in \mathbb{Z}\setminus \{0\}} \frac{(-1)^{m}}{\sinh(m\pi)m^{s}}\ \ \ (s \in \mathbb{Z}), \label{eq-1-1}
\end{equation}
where $\sinh x=(e^x-e^{-x})/2$. He showed that several values at
$s=4k+3$ $(k \in \mathbb{N}_0)$ can be written in terms of
$\pi$. After his work, this series was considered by Mellin,
Ramanujan, and several other authors (see \cite{Be2,Be3,Me1,Me2}), and
the following fascinating formula was proved:
\begin{equation}
  \sum_{m \in \mathbb{Z}\setminus \{0\}} \frac{(-1)^{m}}{\sinh(m\pi)m^{4k+3}} ={(2\pi)^{4k+3}}\sum_{j=0}^{2k+2}(-1)^{j+1} \frac{B_{2j}(1/2)}{(2j)!}\frac{B_{4k+4-2j}(1/2)}{(4k+4-2j)!}  \label{eq-1-2}
\end{equation}
for $k \in \mathbb{N}_0$, where $B_j(y)$ is the $j$th Bernoulli
polynomial defined by
\begin{equation}
  F(t,y)= \frac{te^{ty}}{e^t-1}=\sum_{j=0}^\infty B_j(y)\frac{t^j}{j!} \label{Ber-poly}
\end{equation}
(see \cite{Di}).  As a result related to \eqref{eq-1-2}, it is also
known that
\begin{equation}
  \sum_{m\in \mathbb{Z}\setminus \{0\}}\frac{\coth(m\pi)}{m^{4k+3}}={(2\pi)^{4k+3}}\sum_{j=0}^{2k+2}(-1)^{j+1} \frac{B_{2j}(0)}{(2j)!}\frac{B_{4k+4-2j}(0)}{(4k+4-2j)!}  \label{eq-1-3}
\end{equation}
for $k \in \mathbb{N}_0$, which is written in Ramanujan's notebooks
(see Berndt \cite[(25.3) p.\,293]{Be2}), where $\coth
x=(e^x+e^{-x})/(e^x-e^{-x})$. In fact, \eqref{eq-1-3} can be easily
derived from Ramanujan's famous formula (see Berndt
\cite[p.\,275]{Be2}):
\begin{equation}
  \begin{split}
    & \alpha^{-N}\left\{ \frac{1}{2}\zeta(2N+1)+\sum_{k=1}^\infty \frac{1}{\left(e^{2k\alpha}-1\right)k^{2N+1}}\right\} \\
    & =(-\beta)^{-N}\left\{ \frac{1}{2}\zeta(2N+1)+\sum_{k=1}^\infty \frac{1}{\left(e^{2k\beta}-1\right)k^{2N+1}}\right\} \\
    & \ \
    -2^{2N}\sum_{k=0}^{N+1}(-1)^{k}\frac{B_{2k}(0)}{(2k)!}\frac{B_{2N+2-2k}(0)}{(2N+2-2k)!}\alpha^{N+1-k}\beta^{k},
  \end{split}
  \label{eq-1-4}
\end{equation}
where $N$ is any non-zero integer, $\alpha$ and $\beta$ are positive
numbers such that $\alpha\beta=\pi^2$ and $\zeta(s)$ is the Riemann
zeta-function.

In the 1970's, Berndt \cite{BeTAMS,Be0} studied generalized Eisenstein series
and proved transformation formulas for them. Using this result, he
gave a family of evaluation formulas for certain Dirichlet series in
\cite{BeRMJM,Be1}, including \eqref{eq-1-2}, \eqref{eq-1-3} and \eqref{eq-1-4} 
(see also Remark \ref{R-1}).

What is the meaning of the above infinite series involving hyperbolic
functions?    We can find that they are connected with Barnes multiple
zeta-functions.   In fact,
in the former half part of this paper, we study functional equations
for the Barnes zeta-functions and we show two expressions of the
Barnes zeta-functions or their residues at integers.  We observe that
Ramanujan's formula \eqref{eq-1-4} (and hence \eqref{eq-1-3}) can be
deduced by combining these two expressions in the double case.  Hence
in the multiple cases, the combination of these expressions may be
regarded as generalizations of Ramanujan's formula (see Corollary
\ref{cor:spvals_Barnes_0}).  

Motivated by this observation, in the
latter half part, we first give a very general form of evaluation
formulas (see Theorem \ref{T-2-1}), which is out of the frame of Barnes
zeta-functions.   From this form, we deduce a
certain explicit evaluation formula with a parameter $y\in [0,1]$ (see
Theorem \ref{P-3-1}) which may be regarded as a relation of several
Barnes zeta-functions at non-positive integers.  This formula
especially implies \eqref{eq-1-2} and \eqref{eq-1-3} (see Corollaries
\ref{C-3-1} and \ref{C-3-2}) and also implies a lot of presumably new
formulas, for example,
\begin{align}
 & \sum_{m\in \mathbb{Z}\setminus \{0\}}\frac{1}{\sinh(m\pi i/\rho)^2
    m^4} =-\frac{1}{2835}\pi^4, \label{eq-1-6}\\
  & \sum_{m\in \mathbb{Z}\setminus \{0\}}\frac{\coth(m\pi i/\rho)^2 }{m^4} =\frac{62}{2835}\pi^4, \label{eq-1-5}
\end{align}
where $i=\sqrt{-1}$ and $\rho=(-1+\sqrt{-3})/2$, the cube root of unity,
and the same type of formulas including higher power roots of unity
(see Corollary \ref{C-3-3}, Example \ref{Ex-3-4}).

%%%%%%%%%%%%%%%%%%%%%%%%%%%%%%%%%%%%%%%%%%%%%%%%%%%%%%%%%%%%%%%%%%%%%%%%%%%

%%%%%%%%%%%%%%%%%%%%%%%%%%%%%%%%%%%%%%%%%%%%%%%
\section{Functional equations for Barnes zeta-functions} \label{sec-1a}
%%%%%%%%%%%%%%%%%%%%%%%%%%%%%%%%%%%%%%%%%%%%%%%
For $\theta\in\mathbb{R}$ let
$H(\theta)=\{z=re^{i(\theta+\phi)}\in\mathbb{C}~|~r>0,-\pi/2<\phi<\pi/2\}$
be the open half plane whose normal vector is $e^{i\theta}$.  We
recall the Barnes zeta-function defined by the following multiple
Dirichlet series:
\begin{equation}
  \label{eq:def_Barnes}
  \zeta_n(s,a;\omega_1,\ldots,\omega_n)
  =\sum_{m_1=0}^\infty\cdots\sum_{m_n=0}^\infty \frac{1}{(a+\omega_1m_1+\cdots+\omega_nm_n)^s},
\end{equation}
where all $a,\omega_1,\ldots,\omega_n\in H(\theta)$ for some $\theta$.
Then it is known that this Dirichlet series converges absolutely
uniformly on any compact subset in $\Re s>n$.

Assume at first that $\Re s>n$.  For $x\in H(\theta)$, we have the
formula for the gamma function
\begin{equation}
  \label{eq:formula_gamma}
  x^{-s}=\frac{1}{\Gamma(s)}\int_0^{e^{-i\theta}\infty}e^{-xt}t^{s-1}dt.
\end{equation}
Since 
\begin{equation}
  a+\omega_1m_1+\cdots+\omega_nm_n\in H(\theta)
\end{equation}
for $m_1,\ldots,m_n\in\mathbb{N}$, we can apply
\eqref{eq:formula_gamma} to each term in \eqref{eq:def_Barnes} to get
\begin{equation}
  \begin{split}
    \zeta_n(s,a;\omega_1,\ldots,\omega_n)
    &=\sum_{m_1=0}^\infty\cdots\sum_{m_n=0}^\infty
    \frac{1}{\Gamma(s)}\int_0^{e^{-i\theta}\infty}e^{-(a+\omega_1m_1+\cdots+\omega_nm_n)t}t^{s-1}dt
    \\
    &=\frac{1}{\Gamma(s)}
    \int_0^{e^{-i\theta}\infty}
    \frac{e^{(\omega_1+\cdots+\omega_n-a)t}}
    {(e^{\omega_1t}-1)\cdots(e^{\omega_nt}-1)}
    t^{s-1}dt
    \\
    &=\frac{1}{\Gamma(s)(e^{2\pi is}-1)}
    \int_{C(\theta)}
    \frac{e^{(\omega_1+\cdots+\omega_n-a)t}}
    {(e^{\omega_1t}-1)\cdots(e^{\omega_nt}-1)}
    t^{s-1}dt,
  \end{split}
\end{equation}
where the argument of $t$ is taken in $-\theta\leq\arg
t\leq-\theta+2\pi$ and $C(\theta)$ is a contour which starts at
$e^{-i\theta}\infty$, goes counterclockwise around the origin with
sufficiently small radius, and ends at $e^{-i\theta}\infty$.  
Let
$0\leq y_1,\ldots,y_n<1$ and put 
$$
a=a(y_1,\ldots,y_n)=\omega_1(1-y_1)+\cdots+\omega_n(1-y_n)\in H(\theta).
$$   
Then
\begin{equation}
\label{eq:int_rep_Barnes}
  \begin{split}
    &\zeta_n(s,a(y_1,\ldots,y_n);\omega_1,\ldots,\omega_n)\\
    &\qquad=\frac{1}{\Gamma(s)(e^{2\pi is}-1)}
    \int_{C(\theta)}
    \frac{e^{(\omega_1 y_1+\cdots+\omega_n y_n)t}}
    {(e^{\omega_1t}-1)\cdots(e^{\omega_nt}-1)}
    t^{s-1}dt
    \\
    &\qquad=\frac{\prod_{j=1}^n \omega_j^{-1}}{\Gamma(s)(e^{2\pi is}-1)}
    \int_{C(\theta)}
    \Bigl(
    \prod_{j=1}^n F(\omega_j t,y_j)
    \Bigr)
    t^{s-n-1}dt.
  \end{split}
\end{equation}
% Let
% $0\leq y<1$ and put
% \begin{equation*}
% a=a(y)=(\omega_1+\cdots+\omega_n)(1-y)\in H(\theta).
% \end{equation*}
% % Let
% % $0\leq y_1,\ldots,y_n<1$ and put
% % \begin{equation*}
% % a=a(y_1,\ldots,y_n)=\omega_1(1-y_1)+\cdots+\omega_n(1-y_n)\in H(\theta).
% % \end{equation*}
% Then
% \begin{equation}
% \label{eq:int_rep_Barnes}
% \begin{split}
%     \zeta_n(s,a(y);\omega_1,\ldots,\omega_n)%\\
% %    &\zeta_n(s,a(y_1,\ldots,y_n);\omega_1,\ldots,\omega_n)\\
%     &=\frac{1}{\Gamma(s)(e^{2\pi is}-1)}
%     \int_{C(\theta)}
%     \frac{e^{(\omega_1+\cdots+\omega_n) yt}}
%     {(e^{\omega_1t}-1)\cdots(e^{\omega_nt}-1)}
%     t^{s-1}dt
%     \\
%     &=\frac{\prod_{j=1}^n \omega_j^{-1}}{\Gamma(s)(e^{2\pi is}-1)}
%     \int_{C(\theta)}
%     \Bigl(
%     \prod_{j=1}^n F(\omega_j t,y)
%     \Bigr)
%     t^{s-n-1}dt.
%   \end{split}
% \end{equation}
If $t\in C(\theta)$ is sufficiently far from the origin, then
$\Re(\omega_j t)>0$ ($1\leq j\leq n$).  Therefore the integral on the
rightmost side converges absolutely uniformly on the whole space
$\mathbb{C}$, so \eqref{eq:int_rep_Barnes} gives the meromorphic
continuation of
$\zeta_n(s,a(y);\omega_1,\ldots,\omega_n)$ to the whole
space $\mathbb{C}$.

In the following, we assume that $n\geq2$ and $\Im
(\omega_j/\omega_k)\neq0$ for any pair $(j,k)$ with $j\neq k$.  
From
the above integral expression we obtain the following functional
equations for Barnes zeta-functions.  
When $y_1=\cdots=y_n=y$, we write $a(y_1,\ldots,y_n)=a(y)$ for brevity.
\begin{theorem}[functional equations]
\label{thm:funceq_Barnes}
We have
\begin{multline}
  \label{eq:funceq_Barnes}
  \zeta_n(s,a(y);\omega_1,\ldots,\omega_n) 
  \\
  =-
  \frac{2\pi i}{\Gamma(s)(e^{2\pi is}-1)} 
  \sum_{k=1}^n
  \sum_{m\in\mathbb{Z}\setminus\{0\}} 
  \omega_k^{-1}
  \Bigl( 
  \prod_{\substack{j=1\\j\neq k}}^n 
  \frac{e^{(2m\pi i\omega_j/\omega_k)y}}
  {e^{2m\pi i\omega_j/\omega_k}-1}
  \Bigr) (2m\pi i\omega_k^{-1})^{s-1}e^{2m\pi i y},
\end{multline}
where the right-hand side converges absolutely uniformly on the whole space $\mathbb{C}$
if $0<y<1$, and on the region $\Re s<0$ if $y=0$.
%and hence this expression gives the meromorphic continuation of the Barnes 
%eta-function with respect to $s$.

In particular, if $y=1/2$, we have
\begin{multline}
  \label{eq:funceq_Barnes_omake}
  \zeta_n(s,(\omega_1+\cdots+\omega_n)/2;\omega_1,\ldots,\omega_n) 
  \\
  =-
  \frac{1}{2^{n-1}}
  \frac{2\pi i}{\Gamma(s)(e^{2\pi is}-1)} 
  \sum_{k=1}^n
  \sum_{m\in\mathbb{Z}\setminus\{0\}} 
  \omega_k^{-1}
  (-1)^m
  \Bigl( 
  \prod_{\substack{j=1\\j\neq k}}^n 
  \frac{1}
  {\sinh(m\pi i\omega_j/\omega_k)}
  \Bigr) (2m\pi i\omega_k^{-1})^{s-1}.
\end{multline}
\end{theorem}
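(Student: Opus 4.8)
The plan is to evaluate the Hankel-type integral in \eqref{eq:int_rep_Barnes} (taken with $y_1=\cdots=y_n=y$; recall this representation is valid on all of $\mathbb{C}$) by the residue theorem, by pushing the small circle of $C(\theta)$ out to radius $R$ and letting $R\to\infty$. Write $g(t)=\bigl(\prod_{j=1}^n F(\omega_j t,y_j)\bigr)\,t^{s-n-1}$. Each factor $F(\omega_j t,y_j)=\omega_j t\,e^{\omega_j t y_j}/(e^{\omega_j t}-1)$ is holomorphic at $t=0$ and has a simple pole at each point of $(2\pi i/\omega_j)(\mathbb{Z}\setminus\{0\})$; since $\Im(\omega_j/\omega_k)\neq0$ for $j\neq k$, these pole sets are pairwise disjoint (a common point would force $\omega_j/\omega_k\in\mathbb{R}$), so the only singularities of $g$ in $\mathbb{C}\setminus\{0\}$ are the simple poles at $t_{k,m}:=2m\pi i/\omega_k$ $(1\le k\le n,\ m\in\mathbb{Z}\setminus\{0\})$.

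Next I would compute the residue at $t_{k,m}$. From $e^{\omega_k t_{k,m}}=1$ one gets $\Res_{t=t_{k,m}}F(\omega_k t,y)=t_{k,m}e^{2m\pi iy}$, while for $j\neq k$ the value $F(\omega_j t_{k,m},y)=(2m\pi i\omega_j/\omega_k)e^{(2m\pi i\omega_j/\omega_k)y}/(e^{2m\pi i\omega_j/\omega_k}-1)$ is finite and nonzero. Multiplying these together with $t_{k,m}^{s-n-1}$ and collecting the powers of $2m\pi i/\omega_k$ (whose exponents add to $(n-1)+(s-n-1)+1=s-1$) yields
\[
\Res_{t=t_{k,m}}g=\Bigl(\prod_{j\neq k}\omega_j\Bigr)\Bigl(\prod_{j\neq k}\frac{e^{(2m\pi i\omega_j/\omega_k)y}}{e^{2m\pi i\omega_j/\omega_k}-1}\Bigr)(2m\pi i\omega_k^{-1})^{s-1}e^{2m\pi iy}.
\]
Against the prefactor $\prod_{l}\omega_l^{-1}$ of \eqref{eq:int_rep_Barnes} the product $\prod_{j\neq k}\omega_j$ collapses to $\omega_k^{-1}$, so up to the constant $2\pi i/(\Gamma(s)(e^{2\pi is}-1))$ each residue reproduces the $(k,m)$-summand on the right of \eqref{eq:funceq_Barnes}.

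The analytic core is the contour shift. Let $C_R(\theta)$ be $C(\theta)$ with its small circle replaced by $\{|t|=R\}$. Applying the residue theorem on the slit annulus $\{\delta<|t|<R,\ -\theta<\arg t<-\theta+2\pi\}$ and checking orientations gives $\int_{C(\theta)}g\,dt=\int_{C_R(\theta)}g\,dt-2\pi i\sum_{\delta<|t_{k,m}|<R}\Res_{t_{k,m}}g$; since the $t_{k,m}$ lie on the $2n$ fixed rays $\arg t=\pi/2-\arg\omega_k$ and $\arg t=3\pi/2-\arg\omega_k$, we may let $R=R_M\to\infty$ along radii bounded away from all of them. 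It then remains to show $\int_{C_R(\theta)}g\,dt\to0$. On each ray of $C_R(\theta)$ one has $\Re(\omega_j t)>0$ for all $j$, so $F(\omega_j t,y)=O(|t|e^{(y-1)\Re(\omega_j t)})$ decays exponentially and that contribution tends to $0$ for every $s$. On the circle $|t|=R_M$: when $0<y<1$, every factor $F(\omega_j t,y)$ with $\Re(\omega_j t)$ bounded away from $0$ decays exponentially, and near the ray where $\Re(\omega_k t)$ is small only $F(\omega_k t,y)$ can grow, and only like $|t|$ (the remaining factors still decay, their arguments staying off the imaginary axis because $\Im(\omega_j/\omega_k)\neq0$); since $n\ge2$ the product is exponentially small over the whole circle, so the arc integral tends to $0$ for all $s\in\mathbb{C}$. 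When $y=0$, a factor $F(\omega_j t,0)=\omega_j t/(e^{\omega_j t}-1)$ is only $O(|t|)$ wherever $\Re(\omega_j t)<0$, so on $|t|=R_M$ one has merely $g(t)=O(|t|^{n}|t|^{\Re s-n-1})=O(|t|^{\Re s-1})$, hence the arc integral is $O(R_M^{\Re s})$, tending to $0$ exactly when $\Re s<0$. The same dichotomy yields the stated convergence of the right side of \eqref{eq:funceq_Barnes}: using $\Re(2m\pi i\omega_j/\omega_k)=-2m\pi\Im(\omega_j/\omega_k)$, each factor $e^{(2m\pi i\omega_j/\omega_k)y}/(e^{2m\pi i\omega_j/\omega_k}-1)$ is $O(e^{-2\pi|m||\Im(\omega_j/\omega_k)|\min(y,1-y)})$ as $|m|\to\infty$, which beats the polynomial factor $(2m\pi i\omega_k^{-1})^{s-1}$ locally uniformly in $s$ when $0<y<1$, while for $y=0$ these factors are merely bounded, so one needs $\Re s<0$.

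Combining the three steps proves \eqref{eq:funceq_Barnes}, and \eqref{eq:funceq_Barnes_omake} follows by putting $y=1/2$: then $a(1/2)=(\omega_1+\cdots+\omega_n)/2$, $e^{2m\pi i(1/2)}=(-1)^m$, and $e^{2z}-1=2e^{z}\sinh z$ turns each of the $n-1$ factors into $1/(2\sinh(m\pi i\omega_j/\omega_k))$, producing the extra $2^{-(n-1)}$. The step I expect to be the main obstacle is the uniform estimate for $g$ on the circles $|t|=R_M$ — in particular pinning down the sharp $y$-dependence (exponential smallness when $0<y<1$ versus only $O(|t|^{\Re s-1})$ growth when $y=0$, which is exactly what forces $\Re s<0$ in that case) — together with getting the orientation bookkeeping right so that the residue sum enters with the minus sign in \eqref{eq:funceq_Barnes}.
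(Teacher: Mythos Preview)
Your proposal is correct and follows essentially the same approach as the paper: push the circular part of the Hankel contour $C(\theta)$ out to infinity, compute residues at the simple poles $t_{k,m}=2m\pi i/\omega_k$, and show the integral over the expanding circle vanishes (for all $s$ if $0<y<1$, only for $\Re s<0$ if $y=0$). The paper packages the decay estimate a little more uniformly via the single inequality $\Re(\omega_j t)y-(\Re(\omega_j t))_+\le 0$, summed over $j$ to get $\lvert\prod_j F(\omega_j t,y)\rvert\le M\lvert t\rvert^n e^{-T\lvert t\rvert}$ with $T>0$ when $0<y<1$, whereas you argue the same conclusion direction-by-direction; the content is identical.
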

In the case $y=0$ of Theorem \ref{thm:funceq_Barnes}, the series expression
\eqref{eq:funceq_Barnes} is valid only for $\Re s<0$.  In
order to remove this restriction, we decompose the series into the
terms involving the
Riemann zeta-function and the remaining parts.  
For $k\in\{1,\ldots,n\}$, let
$I_k^+=\{j\in\{1,\ldots,n\}\setminus\{k\}|\Im(\omega_j/\omega_k)>0\}$
and
$I_k^-=\{j\in\{1,\ldots,n\}\setminus\{k\}|\Im(\omega_j/\omega_k)<0\}$.
Let
\begin{equation}
  \delta(J)=
  \begin{cases}
    0\qquad&(J\neq\emptyset)\\
    (-1)^{n+1}\qquad&(J=\emptyset)
  \end{cases}
\end{equation}
for $J\subset\{1,\ldots,n\}$.

% Let $k\in\{1,\ldots,n\}$.
% For any set $J_k\subset\{1,\ldots,n\}\setminus\{k\}$, we denote
% $J_k^c=\{1,\ldots,n\}\setminus (J_k\cup\{k\})$ and put
% \begin{equation}
%   \delta(J_k)=
%   \begin{cases}
%     0\qquad&(J_k\neq\emptyset)\\
%     (-1)^{n+1}\qquad&(J_k=\emptyset).
%   \end{cases}
% \end{equation}
% Let
% $I_k=\{j\in\{1,\ldots,n\}\setminus\{k\}|\Im(\omega_j/\omega_k)>0\}$.

\begin{corollary}
\label{thm:funceq_Barnes_0}
We have
\begin{multline}
  \zeta_n(s,a(0);\omega_1,\ldots,\omega_n) 
  \\
  \begin{aligned}
  &=-\frac{2\pi i}{\Gamma(s)(e^{2\pi is}-1)}
    \sum_{k=1}^n
    \omega_k^{-1}
    \Biggl\{
    \sum_{m>0}
    (2m\pi i\omega_k^{-1})^{s-1}
    \biggl(
    \Bigl(
    \prod_{\substack{j=1\\j\neq k}}^n
    \frac{1}{e^{2m\pi i\omega_j/\omega_k}-1}
    \Bigr)
    -\delta(I_k^-)
    \biggr)
%    \Biggr.
    \\
    &\qquad
    +
    \sum_{m>0}
    (-2m\pi i\omega_k^{-1})^{s-1}
    \biggl(
    \Bigl(
    \prod_{\substack{j=1\\j\neq k}}^n
    \frac{1}{e^{-2m\pi i\omega_j/\omega_k}-1}
    \Bigr)
    -\delta(I_k^+)
    \biggr)
    \\
    &\qquad
%    \Biggl.
    +
    \delta(I_k^-)
    (2\pi i\omega_k^{-1})^{s-1}
    \zeta(1-s)
%     \\
%     &\qquad
    +
    \delta(I_k^+)
    (-2\pi i\omega_k^{-1})^{s-1}
    \zeta(1-s)
    \Biggr\},
  \end{aligned}
\end{multline}
where the series in the right-hand side converge absolutely uniformly on the whole space $\mathbb{C}$.
\end{corollary}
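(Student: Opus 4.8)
The plan is to derive the corollary from the case $y=0$ of Theorem~\ref{thm:funceq_Barnes}, whose right-hand side we already know to converge only on the region $\Re s<0$, by separating off from the inner sum exactly those pieces that obstruct convergence elsewhere. First I would split $\sum_{m\in\mathbb{Z}\setminus\{0\}}$ into the parts over $m>0$ and over $m<0$, and in the latter replace $m$ by $-m$. This presents, for each $k$, two sums over $m>0$ whose general terms contain the products $\prod_{j\neq k}(e^{\pm2m\pi i\omega_j/\omega_k}-1)^{-1}$ together with the monomials $(\pm2m\pi i\omega_k^{-1})^{s-1}$, the latter having absolute value of polynomial size $O(\lvert m\rvert^{\Re s-1})$ on compact sets of the $s$-plane.

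The crucial point is the asymptotic behaviour of these products. Under the standing assumptions $n\geq2$ and $\Im(\omega_j/\omega_k)\neq0$ for $j\neq k$, we have the disjoint decomposition $\{1,\ldots,n\}\setminus\{k\}=I_k^+\cup I_k^-$. For $m>0$ one checks that $\lvert e^{2m\pi i\omega_j/\omega_k}\rvert\to0$ if $j\in I_k^+$ and $\lvert e^{2m\pi i\omega_j/\omega_k}\rvert\to\infty$ if $j\in I_k^-$, so each factor with $j\in I_k^+$ tends to $-1$ while each factor with $j\in I_k^-$ tends to $0$; hence $\prod_{j\neq k}(e^{2m\pi i\omega_j/\omega_k}-1)^{-1}\to\delta(I_k^-)$ (the case $I_k^-=\emptyset$ producing the constant $(-1)^{n-1}=\delta(\emptyset)$). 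Moreover the convergence is exponentially fast, $\prod_{j\neq k}(e^{2m\pi i\omega_j/\omega_k}-1)^{-1}-\delta(I_k^-)=O(e^{-cm})$ for a suitable $c>0$. Passing to $-m$ interchanges the roles of $I_k^+$ and $I_k^-$, so the product appearing in the former $m<0$ part tends, again exponentially, to $\delta(I_k^+)$.

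I would then write each product as its limiting constant plus the exponentially decaying remainder. The series built from the remainders have general term of size $O(\lvert m\rvert^{\Re s-1}e^{-cm})$, hence converge absolutely and uniformly on every compact subset of $\mathbb{C}$ and define holomorphic functions there; these furnish the first two families of sums in the statement. The terms built from the limiting constants collapse to $\delta(I_k^-)(2\pi i\omega_k^{-1})^{s-1}\sum_{m>0}m^{s-1}$ and $\delta(I_k^+)(-2\pi i\omega_k^{-1})^{s-1}\sum_{m>0}m^{s-1}$, using that $(mw)^{s-1}=m^{s-1}w^{s-1}$ for $m>0$; since $\sum_{m>0}m^{s-1}=\zeta(1-s)$ for $\Re s<0$ and $\zeta$ continues meromorphically to $\mathbb{C}$, these become the two $\zeta(1-s)$-terms in the statement. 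Multiplying through by $-2\pi i/(\Gamma(s)(e^{2\pi is}-1))$ and summing over $k$ reproduces the asserted identity for $\Re s<0$; as both sides are meromorphic on all of $\mathbb{C}$, the identity persists throughout by analytic continuation.

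The only genuinely delicate step is the exponential estimate for the products. When $I_k^-\neq\emptyset$ it follows by bounding the finitely many $I_k^+$-factors by a constant and each $I_k^-$-factor by $(e^{2m\pi\lvert\Im(\omega_j/\omega_k)\rvert}-1)^{-1}$; when $I_k^-=\emptyset$ one instead expands each factor as $-1+O(e^{-2m\pi\Im(\omega_j/\omega_k)})$ and multiplies out, the error rate being governed by $\min_{j}\lvert\Im(\omega_j/\omega_k)\rvert$. Everything else — the rearrangement of the now absolutely convergent sums, the identification with $\zeta(1-s)$, and the analytic continuation — is routine.
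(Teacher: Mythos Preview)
Your proposal is correct and follows essentially the same route as the paper's proof: split the sum over $m\in\mathbb{Z}\setminus\{0\}$ into $m>0$ and $m<0$, identify the limiting constant $\delta(I_k^{\mp})$ of the product as $m\to\infty$, subtract it to obtain an exponentially decaying remainder, and recognize the subtracted piece as $\zeta(1-s)$. The only cosmetic difference is that the paper treats the cases $I_k^-\neq\emptyset$ and $I_k^-=\emptyset$ separately (invoking in the latter case the algebraic identity $\frac{1}{e^x-1}=-1-\frac{1}{e^{-x}-1}$ and expanding the product over $j$), whereas you phrase both cases uniformly as ``subtract the limit''; the underlying estimates and the expansion you sketch in your final paragraph are the same.
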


Proofs of Theorem \ref{thm:funceq_Barnes} and Corollary \ref{thm:funceq_Barnes_0}
will be given in Section 4.

In the following, the empty sum should be understood as $0$.
\begin{corollary}
\label{cor:spvals_Barnes}
For $l\in\mathbb{Z}$ {\rm(}or $l>n$ if $y=0${\rm)}, we have
\begin{multline}
  \label{eq:spvals_Barnes}
  \sum_{k=1}^n
  \sum_{m\in\mathbb{Z}\setminus\{0\}} 
  \omega_k^{-1}
  \Bigl( 
  \prod_{\substack{j=1\\j\neq k}}^n 
  \frac{e^{(2m\pi i\omega_j/\omega_k)y}}
  {e^{2m\pi i\omega_j/\omega_k}-1}
  \Bigr) (2m\pi i\omega_k^{-1})^{n-l-1}e^{2m\pi i y}
\\
=
\begin{cases}
  \dfrac{(-1)^{l-n+1}}{(l-n)!}
  \zeta_n(n-l,a(y);\omega_1,\ldots,\omega_n) \qquad&(l\geq n)\\%[5truemm]
  -(n-l-1)!\Res_{s=n-l}
  \zeta_n(s,a(y);\omega_1,\ldots,\omega_n) \qquad&(l< n).
\end{cases}
\end{multline}
On the other hand, this is equal to
\begin{equation}
\label{eq:Ber}
  -
\sum_{\substack{m_1,\ldots,m_n\geq0\\m_1+\cdots+m_n=l}}
\prod_{j=1}^n\frac{B_{m_j}(y)}{m_j!}\omega_j^{m_j-1}.
\end{equation}
\end{corollary}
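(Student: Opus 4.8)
The plan is to derive the first equality in \eqref{eq:spvals_Barnes} from the functional equation \eqref{eq:funceq_Barnes} and the second equality \eqref{eq:Ber} from the contour representation \eqref{eq:int_rep_Barnes}. Write $s=n-l$ and let $T(s)$ denote the double series over $k$ and $m$ on the right-hand side of \eqref{eq:funceq_Barnes}, but with a general exponent $s-1$ in place of $n-l-1$, so that Theorem \ref{thm:funceq_Barnes} reads
\[
  \zeta_n(s,a(y);\omega_1,\ldots,\omega_n)=-\frac{2\pi i}{\Gamma(s)(e^{2\pi is}-1)}\,T(s),
\]
and $T(n-l)$ is exactly the left-hand side of \eqref{eq:spvals_Barnes}. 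By Theorem \ref{thm:funceq_Barnes}, $T$ is entire when $0<y<1$ and is holomorphic on $\Re s<0$ when $y=0$; in the latter case the restriction ``$l>n$'' forces $\Re(n-l)<0$, so in every admissible case $T$ is holomorphic at $s=n-l$.

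First I would record the behaviour of $g(s):=1/(\Gamma(s)(e^{2\pi is}-1))$ at integers: at $s=-k$ with $k\in\nn$ the simple zero of $e^{2\pi is}-1$ meets the simple pole of $\Gamma$, so $g$ is holomorphic there with $g(-k)=(-1)^{k}k!/(2\pi i)$; at $s=p$ with $p\in\n$ one has $\Gamma(p)\neq0$, so $g$ has a simple pole there with residue $1/((p-1)!\,2\pi i)$. Substituting $s=n-l$ in the displayed functional equation and using holomorphy of $T$ at $n-l$: if $l\geq n$ (so $n-l=-k$ with $k=l-n$) we get $\zeta_n(n-l,a(y);\ldots)=(-1)^{l-n+1}(l-n)!\,T(n-l)$, which rearranges to the first line of \eqref{eq:spvals_Barnes}; if $l<n$ (so $n-l\in\n$) then $\zeta_n(s,a(y);\ldots)$ has at most a simple pole at $s=n-l$, with $\Res_{s=n-l}\zeta_n=-T(n-l)/(n-l-1)!$, which rearranges to the second line. (When $l<0$ the pole is in fact absent and both sides of \eqref{eq:spvals_Barnes} vanish, matching the empty-sum convention.)

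Next I would evaluate $\zeta_n(n-l,a(y);\ldots)$, respectively $\Res_{s=n-l}\zeta_n$, from \eqref{eq:int_rep_Barnes}, which we may write as $\zeta_n(s,a(y);\ldots)=\bigl(\prod_{j=1}^n\omega_j^{-1}\bigr)g(s)\,J(s)$ with $J(s)=\int_{C(\theta)}\Phi(t)\,t^{s-n-1}\,dt$ entire and $\Phi(t)=\prod_{j=1}^nF(\omega_jt,y)$. Expanding each factor by \eqref{Ber-poly} and multiplying out gives the Taylor series $\Phi(t)=\sum_{L\geq0}c_Lt^L$, where $c_L=\sum_{r_1+\cdots+r_n=L}\prod_{j=1}^nB_{r_j}(y)\omega_j^{r_j}/r_j!$ and $c_0=\Phi(0)=1$. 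The key point is that at $s=n-l$ the exponent in $J$ becomes the integer $s-n-1=-l-1$, so $\Phi(t)t^{-l-1}$ is single-valued and meromorphic near $t=0$; on the keyhole contour $C(\theta)$ the incoming and outgoing rays then cancel (they are the same geometric ray with opposite orientations and the integrand takes the same values on both), leaving only the small circle, whence $J(n-l)=\int_{C(\theta)}\Phi(t)t^{-l-1}\,dt=2\pi i\,\Res_{t=0}\bigl(\Phi(t)t^{-l-1}\bigr)=2\pi i\,c_l$ (with $c_l=0$ for $l<0$). Combining this with the local behaviour of $g$ gives $\zeta_n(n-l,a(y);\ldots)=(-1)^{l-n}(l-n)!\bigl(\prod_j\omega_j^{-1}\bigr)c_l$ for $l\geq n$, and $\Res_{s=n-l}\zeta_n=\bigl(\prod_j\omega_j^{-1}\bigr)c_l/(n-l-1)!$ for $l<n$; feeding either into the conclusions of the previous paragraph yields in both cases $T(n-l)=-\bigl(\prod_j\omega_j^{-1}\bigr)c_l=-\sum_{r_1+\cdots+r_n=l}\prod_{j=1}^n\frac{B_{r_j}(y)}{r_j!}\,\omega_j^{r_j-1}$, which is \eqref{eq:Ber}.

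I expect the only step needing genuine care to be the collapse of the contour integral to $2\pi i\,c_l$: one must make sure the small circle of $C(\theta)$ has radius smaller than all the moduli $|2m\pi i/\omega_j|$ ($m\neq0$), so the only pole of $\Phi(t)t^{-l-1}$ enclosed is the removable point $t=0$; that the two rays of $C(\theta)$ coincide as subsets of $\c$ so their contributions really do cancel once the integrand is single-valued; and that $\Phi$ is holomorphic at $t=0$, so the circular integral is $2\pi i$ times the coefficient of $t^{l}$ in $\Phi$. Everything else amounts to bookkeeping with the gamma- and exponential-factors, where only the tracking of signs and factorials is delicate; the agreement of the two independent computations of $T(n-l)$ serves as a built-in consistency check.
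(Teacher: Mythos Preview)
Your proposal is correct and follows essentially the same route as the paper's own proof: derive \eqref{eq:spvals_Barnes} from the functional equation of Theorem~\ref{thm:funceq_Barnes} via the local expansion \eqref{gamma-residue} of $\Gamma(s)(e^{2\pi is}-1)/(2\pi i)$ at integers, and then obtain \eqref{eq:Ber} by evaluating the contour integral \eqref{eq:int_rep_Barnes} at $s=n-l$ as $-\bigl(\prod_j\omega_j^{-1}\bigr)\Res_{t=0}\bigl\{\bigl(\prod_j F(\omega_j t,y)\bigr)t^{-l-1}\bigr\}$. Your write-up simply fills in the bookkeeping (the collapse of $C(\theta)$ to a small circle when the exponent is an integer, and the case $l<0$) that the paper leaves implicit.
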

\begin{proof}
  For $k\in\mathbb{Z}$, the expansion
  \begin{equation}
    \label{gamma-residue}
    \frac{\Gamma(s)(e^{2\pi is}-1)}{2\pi i}=
    \begin{cases}
\dfrac{(-1)^k }{(-k)!}+O(s-k)\qquad&(k\leq 0)\\
(k-1)!(s-k)+O((s-k)^2)\qquad&(k>0)
\end{cases}
\end{equation}
%   \begin{equation}
%     \label{gamma-residue}
%     \lim_{s\to-k}\frac{\Gamma(s)(e^{2\pi is}-1)}{2\pi i}=\frac{(-1)^k }{k!},
%   \end{equation}
  holds.   Using this 
  and Theorem \ref{thm:funceq_Barnes}, we obtain
  \eqref{eq:spvals_Barnes}.  On the other hand, by use of the integral representation
  \eqref{eq:int_rep_Barnes}, we see that the left-hand side of
  \eqref{eq:spvals_Barnes} is equal to
\begin{equation}
\label{eq:res_Barnes}
%   \frac{(-1)^{l-n+1}}{(l-n)!}
%     \zeta_n(n-l,a(y);\omega_1,\ldots,\omega_n) 
%     \\
%     =
    -
    \Bigl(\prod_{j=1}^n\omega_j^{-1}\Bigr)
    \Res_{t=0}
    \left\{\Bigl(
    \prod_{j=1}^n F(\omega_j t,y)
    \Bigr)
    t^{-l-1}\right\},
\end{equation}
which yields \eqref{eq:Ber}.
\end{proof}
Similarly we have the following.
\begin{corollary}
\label{cor:spvals_Barnes_0}
For $l\in\mathbb{Z}\setminus\{n\}$, we have
\begin{equation}
\label{eq:spvals_Barnes_0}
  \begin{split}
    &\sum_{k=1}^n
    \omega_k^{-1}
%   \\
%   &\times
    \Biggl(
    \sum_{m>0}
    (2m\pi i\omega_k^{-1})^{n-l-1}
    \biggl(
    \Bigl(
    \prod_{\substack{j=1\\j\neq k}}^n
    \frac{1}{e^{2m\pi i\omega_j/\omega_k}-1}
    \Bigr)
    -\delta(I_k^-)
    \biggr)
    \\
    &\qquad+
    \sum_{m>0}
    (-2m\pi i\omega_k^{-1})^{n-l-1}
    \biggl(
    \Bigl(
    \prod_{\substack{j=1\\j\neq k}}^n
    \frac{1}{e^{-2m\pi i\omega_j/\omega_k}-1}
    \Bigr)
    -\delta(I_k^+)
    \biggr)
%\Biggr)
    \\
    &\qquad
    + 
%{\delta_{l\neq n}}
%     \sum_{k=1}^n
%     \omega_k^{-1}
%    \Biggl(
    \delta(I_k^-)
    (2\pi i\omega_k^{-1})^{n-l-1}
    \zeta(1-n+l)
    \\
    &\qquad
    +
    \delta(I_k^+)
    (-2\pi i\omega_k^{-1})^{n-l-1}
    \zeta(1-n+l)
    \Biggr)
%    \\
  \\
  &=
  \begin{cases}
    \dfrac{(-1)^{l-n+1}}{(l-n)!}
    \zeta_n(n-l,a(0);\omega_1,\ldots,\omega_n) \qquad&(l>n)\\%[5truemm]
    -(n-l-1)!\Res_{s=n-l}
    \zeta_n(s,a(0);\omega_1,\ldots,\omega_n) \qquad&(l<n)
  \end{cases}
  \\
  &=
  -%\Bigl(\prod_{j=1}^n\omega_j^{-1}\Bigr)
  \sum_{\substack{m_1,\ldots,m_n\geq0\\m_1+\cdots+m_n=l}}
  \prod_{j=1}^n\frac{B_{m_j}(0)}{m_j!}\omega_j^{m_j-1}.
\end{split}
\end{equation}
\end{corollary}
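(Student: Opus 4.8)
The plan is to follow the proof of Corollary \ref{cor:spvals_Barnes} almost verbatim, with Theorem \ref{thm:funceq_Barnes} replaced by Corollary \ref{thm:funceq_Barnes_0} and with $y=0$ throughout. Since $l\neq n$, the factor $\zeta(1-s)$ occurring in Corollary \ref{thm:funceq_Barnes_0} is finite at $s=n-l$, and by that corollary all the series on its right-hand side converge absolutely uniformly on $\mathbb{C}$; hence one may simply substitute $s=n-l$. Doing so, the left-hand side of \eqref{eq:spvals_Barnes_0} is exactly the bracketed expression of Corollary \ref{thm:funceq_Barnes_0} at $s=n-l$, which by that corollary equals $-\lim_{s\to n-l}\bigl(\Gamma(s)(e^{2\pi is}-1)/(2\pi i)\bigr)\,\zeta_n(s,a(0);\omega_1,\ldots,\omega_n)$.

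Next I would feed in the expansion \eqref{gamma-residue} with $k=n-l$. When $l>n$ we have $k\leq0$, so $\Gamma(s)(e^{2\pi is}-1)/(2\pi i)\to(-1)^{n-l}/(l-n)!$ while $\zeta_n(\cdot,a(0);\omega_1,\ldots,\omega_n)$ is holomorphic at $s=n-l$; since $(-1)^{n-l}=(-1)^{l-n}$ this gives the first case, $\frac{(-1)^{l-n+1}}{(l-n)!}\zeta_n(n-l,a(0);\omega_1,\ldots,\omega_n)$. When $l<n$ we have $k>0$, so $\Gamma(s)(e^{2\pi is}-1)/(2\pi i)=(n-l-1)!(s-(n-l))+O((s-(n-l))^2)$, and the limit picks out $-(n-l-1)!\Res_{s=n-l}\zeta_n(s,a(0);\omega_1,\ldots,\omega_n)$; note that for $l<0$ this residue vanishes (there being no pole of $\zeta_n$ at $s=n-l>n$) in agreement with the empty Bernoulli sum. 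This yields the first equality in \eqref{eq:spvals_Barnes_0}.

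For the second equality I would argue exactly as in the proof of Corollary \ref{cor:spvals_Barnes}: start from the integral representation \eqref{eq:int_rep_Barnes} with $y_1=\cdots=y_n=0$, set $s=n-l$ so that $t^{s-n-1}=t^{-l-1}$ and the integrand $\bigl(\prod_{j=1}^n F(\omega_jt,0)\bigr)t^{-l-1}$ is single-valued near $C(\theta)$; then the two rays of $C(\theta)$ cancel and the contour collapses to a small circle about the origin, so $\int_{C(\theta)}\bigl(\prod_j F(\omega_jt,0)\bigr)t^{-l-1}\,dt=2\pi i\,\Res_{t=0}\bigl\{\bigl(\prod_j F(\omega_jt,0)\bigr)t^{-l-1}\bigr\}$. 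Combining this with \eqref{gamma-residue} as in the previous paragraph (the factorials cancel in both cases), the middle expression of \eqref{eq:spvals_Barnes_0} becomes $-\bigl(\prod_{j=1}^n\omega_j^{-1}\bigr)\Res_{t=0}\bigl\{\bigl(\prod_j F(\omega_jt,0)\bigr)t^{-l-1}\bigr\}$, that is, \eqref{eq:res_Barnes} specialized at $y=0$. Expanding each factor via the generating function \eqref{Ber-poly} and reading off the coefficient of $t^{-1}$ then produces $\sum_{m_1+\cdots+m_n=l}\prod_{j=1}^n B_{m_j}(0)\omega_j^{m_j}/m_j!$, hence the last line of \eqref{eq:spvals_Barnes_0}.

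The one place where the regularized form of Corollary \ref{thm:funceq_Barnes_0} is genuinely needed, rather than a brute specialization $y=0$ of Corollary \ref{cor:spvals_Barnes}, is the range $l<n$: there $n-l-1\geq0$ and the unregularized tails $\sum_{m>0}m^{n-l-1}$ diverge, so one must work with the split series and with the integral representation for the Bernoulli side. For $l>n$ one could alternatively recover the statement by undoing the splitting, since then $\sum_{m>0}m^{n-l-1}=\zeta(l-n+1)=\zeta(1-n+l)$ and the $\delta(I_k^{\pm})$-corrections in \eqref{eq:spvals_Barnes_0} exactly cancel the two $\zeta(1-n+l)$-terms, leaving \eqref{eq:spvals_Barnes} at $y=0$. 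Thus the only real care needed is the sign bookkeeping in \eqref{gamma-residue}, which is identical to that in Corollary \ref{cor:spvals_Barnes}; I expect no substantive obstacle beyond this.
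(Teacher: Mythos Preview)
Your proposal is correct and follows exactly the approach the paper intends: the paper's own ``proof'' of Corollary \ref{cor:spvals_Barnes_0} is the single phrase ``Similarly we have the following,'' meaning one replays the proof of Corollary \ref{cor:spvals_Barnes} with Corollary \ref{thm:funceq_Barnes_0} in place of Theorem \ref{thm:funceq_Barnes} and $y=0$. You carry this out in full, correctly noting that $l\neq n$ ensures $\zeta(1-s)$ is regular at $s=n-l$, and your remarks on why the regularized series are genuinely needed for $l<n$ (and how the $\delta(I_k^{\pm})$-terms cancel back to \eqref{eq:spvals_Barnes} when $l>n$) are accurate elaborations that the paper leaves implicit.
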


In the next section we will show that Ramanujan's formula \eqref{eq-1-4} 
is a consequence of the case $n=2$ of \eqref{eq:spvals_Barnes_0}.
Therefore Corollary \ref{cor:spvals_Barnes_0} can be regarded as 
generalizations of Ramanujan's formula.

Here we give historical remarks.  A kind of functional equations for
the Barnes zeta-functions was first proved by Hardy and Littlewood
\cite{Ha} in the case $n=2$, and a generalization to the case of
general $n$ was discussed in Egami's lecture note \cite{Eg}.  Our
proof of Theorem \ref{thm:funceq_Barnes} is essentially the same as
those of them.  On the other hand, by calculating explicitly the
residue %in the right-hand side of 
\eqref{eq:res_Barnes}, we showed an
expression of $\zeta_n(n-l,a(y);\omega_1,\ldots,\omega_n)$ or its residues in terms of
Bernoulli polynomials.  This type of results is also classical,
already studied by Barnes himself \cite{B01}, \cite{B04} (see also
\cite{Ar}, \cite{Ma98}).  In this sense, both of the two equalities in
\eqref{eq:spvals_Barnes} and \eqref{eq:Ber} are classical.  The novel
point in the present paper is to combine these two equalities.  
A consequence of such a combination is the observation concerning Ramanujan's
formula in the next section.

%%%%%%%%%%%%%%%%%%%%%%%%%%%%%%%%%%%%%%%%%%%%%%%
\section{Ramanujan's formula} \label{sec-1b}
%%%%%%%%%%%%%%%%%%%%%%%%%%%%%%%%%%%%%%%%%%%%%%%
In this section, we show that Ramanujan's formula
\eqref{eq-1-4} can be obtained by combining two equalities given in
Corollary \ref{cor:spvals_Barnes_0}.
In Corollary \ref{cor:spvals_Barnes_0}, consider the
case $n=2$, $\omega_1=\alpha^{1/2}$, $\omega_2=i\beta^{1/2}$ with
$\alpha,\beta\in\mathbb{R}$. Let  $N\in\mathbb{Z}\setminus\{0\}$.

The last member of \eqref{eq:spvals_Barnes_0} is
\begin{equation}
  C_l=
  -(i\alpha^{1/2}\beta^{1/2})^{-1}\sum_{k=0}^l i^k
  \frac{B_{l-k}(0)}{(l-k)!}
  \frac{B_k(0)}{k!}
  \alpha^{(l-k)/2} \beta^{k/2}.
\end{equation}
In particular, for $l=2N+2$,
we have
\begin{equation}
\label{eq:eq_l}
  \begin{split}
    C_{2N+2}
    &=-(i\alpha^{1/2}\beta^{1/2})^{-1}
    \sum_{j=0}^{2N+2} i^j
    \frac{B_{2N+2-j}(0)}{(2N+2-j)!}
    \frac{B_j(0)}{j!}
    \alpha^{(2N+2-j)/2} \beta^{j/2}
    \\
    &=-(i\alpha^{1/2}\beta^{1/2})^{-1}
    \sum_{k=0}^{N+1} (-1)^k
    \frac{B_{2N+2-2k}(0)}{(2N+2-2k)!}
    \frac{B_{2k}(0)}{(2k)!}
    \alpha^{N+1-k} \beta^{k},
  \end{split}
\end{equation}
where we have used $B_{2j+1}(0)=0$ (for $j\geq 1$).

Next we compute the first member of \eqref{eq:spvals_Barnes_0}.  Since
$\delta(I_1^+)=\delta(I_2^-)=0$ and $\delta(I_1^-)=\delta(I_2^+)=-1$, this
is equal to
\begin{equation}
  \begin{split}
    &
    \alpha^{-1/2}\sum_{m>0}(2m\pi i\alpha^{-1/2})^{1-l}
    \biggl(\frac{1}{e^{-2m\pi \beta^{1/2}\alpha^{-1/2}}-1}+1\biggr)
    \\
    &\qquad
    +\alpha^{-1/2}\sum_{m>0}(-2m\pi i\alpha^{-1/2})^{1-l}
    \frac{1}{e^{2m\pi \beta^{1/2}\alpha^{-1/2}}-1}
    \\
    &
    -i\beta^{-1/2}\sum_{m>0}(2m\pi\beta^{-1/2})^{1-l}
    \frac{1}{e^{2m\pi\alpha^{1/2}\beta^{-1/2}}-1}
    \\
    &\qquad
    -i\beta^{-1/2}\sum_{m>0}(-2m\pi\beta^{-1/2})^{1-l}
    \biggl(\frac{1}{e^{-2m\pi \alpha^{1/2}\beta^{-1/2}}-1}+1\biggr)
    \\
    &-\alpha^{-1/2}(2\pi i\alpha^{-1/2})^{1-l}\zeta(l-1)
    +i\beta^{-1/2}(-2\pi\beta^{-1/2})^{1-l}\zeta(l-1).
  \end{split}
\end{equation}
In particular, in the case $\alpha^{1/2}\beta^{1/2}=\pi$,
we see that this is equal to
\begin{equation}
  \label{moshimoshikameyo}
  \begin{split}
    &
    -\alpha^{-1/2}\sum_{m>0}(2m\pi i\alpha^{-1/2})^{1-l}
    \frac{1}{e^{2m \beta}-1}
    \\
    &\qquad
    +\alpha^{-1/2}\sum_{m>0}(-2m\pi i\alpha^{-1/2})^{1-l}
    \frac{1}{e^{2m\beta}-1}
    \\
    &
    -i\beta^{-1/2}\sum_{m>0}(2m\pi\beta^{-1/2})^{1-l}
    \frac{1}{e^{2m\alpha}-1}
    \\
    &\qquad
    +i\beta^{-1/2}\sum_{m>0}(-2m\pi\beta^{-1/2})^{1-l}
    \frac{1}{e^{2m\alpha}-1}
    \\
    &-\alpha^{-1/2}(2\pi i\alpha^{-1/2})^{1-l}\zeta(l-1)
    +i\beta^{-1/2}(-2\pi\beta^{-1/2})^{1-l}\zeta(l-1)
    \\
    &=
    (2\pi)^{1-l}i^{1-l}\alpha^{(l-2)/2}
    \Bigl(-\zeta(l-1)+((-1)^{l-1}-1)
    \sum_{m=1}^\infty
    \frac{1}{(e^{2m\beta}-1)m^{l-1}}
    \Bigr)
    \\
    &\qquad
    -(2\pi)^{1-l}i\beta^{(l-2)/2}
    \Bigl((-1)^l\zeta(l-1)+(1-(-1)^{l-1})
    \sum_{m=1}^\infty
    \frac{1}{(e^{2m\alpha}-1)m^{l-1}}
    \Bigr).
  \end{split}
\end{equation}
Further in the case $l=2N+2$, we see that \eqref{moshimoshikameyo} reduces to
\begin{multline}
  \label{eq:eq_r}
  (\pi i)^{-1}(2\pi)^{-2N}(-\alpha)^N
  \Bigl(-\frac{1}{2}\zeta(2N+1)-
  \sum_{m=1}^\infty
  \frac{1}{(e^{2m\beta}-1)m^{2N+1}}
  \Bigr)
  \\
  +
  (\pi i)^{-1} (2\pi)^{-2N}\beta^N
  \Bigl(\frac{1}{2}\zeta(2N+1)+
  \sum_{m=1}^\infty
  \frac{1}{(e^{2m\alpha}-1)m^{2N+1}}
  \Bigr).
\end{multline}
By equating \eqref{eq:eq_l} and \eqref{eq:eq_r}, we finally obtain
\begin{multline}
%\label{eq:der_rama}
  -2^{2N}\sum_{k=0}^{N+1} (-1)^k
  \frac{B_{2N+2-2k}(0)}{(2N+2-2k)!}
  \frac{B_{2k}(0)}{(2k)!}
  \alpha^{N+1-k} \beta^k
  \\
  =
  (-\beta)^{-N}
  \Bigl(-\frac{1}{2}\zeta(2N+1)-
  \sum_{m=1}^\infty
  \frac{1}{(e^{2m\beta}-1)m^{2N+1}}
  \Bigr)
  \\
  +\alpha^{-N}
  \Bigl(\frac{1}{2}\zeta(2N+1)+
  \sum_{m=1}^\infty
  \frac{1}{(e^{2m\alpha}-1)m^{2N+1}}
  \Bigr),
\end{multline}
which recovers \eqref{eq-1-4}. % for $N>0$.
%coincides with a famous Ramanujan's formula.
% In the case $N<0$, the formula \eqref{eq-1-4} is obtained in a similar
% way.  We first multiply $\Gamma(s)(e^{2\pi is}-1)$ to
% \eqref{eq:int_rep_Barnes} and \eqref{eq:funceq_Barnes}. Then we
% rewrite the right-hand side of \eqref{eq:funceq_Barnes} into the same
% form as the rightmost of \eqref{moshimoshikameyo}, which yields its
% analytic continuation to the whole space.  Lastly putting
% $l=2N+2\in-2\mathbb{N}_0$, we obtain the formula.
% We see that the case $N<0$ corresponds to the residues of Barnes zeta-function.
% This construction can be generalized to the $n>2$ cases.

%From the above calculation, we see that
%Ramanujan's formula
%\eqref{eq-1-4}
%\eqref{eq:der_rama} is
%is essentially \eqref{eq:spvals_Barnes_0} in the case $n=2$.
%  we used some special properties for
% the case $n=2$.
%Hence
%Although this kind of calculation may not be applied to the general cases $n>2$, formula
%\eqref{eq:spvals_Barnes_0}
%can be regarded as generalizations of Ramanujan's formula.

%%%%%%%%%%%%%%%%%%%%%%%%%%%%%%%%%%%%%%%%%%%%%%%%%%%%%%%%%%%%%%%%%%%%%%%%%%%

\section{Proofs}\label{sec-p}

\begin{proof}[Proof of Theorem \ref{thm:funceq_Barnes}]
  For $z\in\mathbb{C}$ and
  $\varepsilon>0$, let $D(z,\varepsilon)$ be the closed disk whose
  center is $z$ with radius $\varepsilon$.  
    We use the notation $x_+=\max\{x,0\}$ for $x\in\mathbb{R}$.
  We first note that for any $\varepsilon>0$, there exists $M'=M'(\varepsilon)>0$ such that
  for $t\in\mathbb{C}\setminus\bigcup_{m\in\mathbb{Z}}D(2m\pi i,\varepsilon)$
  the inequality
  \begin{equation}
    \Bigl\lvert\frac{1}{e^t-1}\Bigr\rvert \leq M' e^{-(\Re t)_+}
  \end{equation}
  holds.
  Hence for
  $t\in\mathbb{C}\setminus\bigcup_{j=1}^n\bigcup_{m\in\mathbb{Z}}D(2m\pi i\omega_j^{-1},\varepsilon)$,
  we have
  \begin{equation}
    \Bigl\lvert
    \prod_{j=1}^n F(\omega_j t,y)
    \Bigr\rvert\leq 
    M \abs{t}^n e^{\sum_{j=1}^n(\Re (\omega_jt)y-\Re (\omega_jt)_+)}
  \end{equation}
with a certain $M=M(\varepsilon,\omega_1,\ldots,\omega_n)>0$.
  Since
  \begin{equation}
    \label{eq:ineq}
    \Re (\omega_jt)y-\Re (\omega_jt)_+\leq 0
  \end{equation}
  for $1\leq j\leq n$,
  we see that there exists $T=T(\varepsilon)\geq0$ such that for
  all $t\in\mathbb{C}$ with $|t|=1$,
  \begin{equation}
    \sum_{j=1}^n((\Re \omega_jt)y-(\Re \omega_jt)_+)\leq -T.
  \end{equation}
  Hence we see that for all 
  $t\in\mathbb{C}\setminus\bigcup_{j=1}^n\bigcup_{m\in\mathbb{Z}}D(2m\pi
  i\omega_j^{-1},\varepsilon)$,
  \begin{equation}\label{estim-M}
    \Bigl\lvert
    \prod_{j=1}^n F(\omega_j t,y)
    \Bigr\rvert\leq 
    M \abs{t}^n e^{-T|t|}.
  \end{equation}
  If $0<y<1$, then we can choose $T>0$.
In fact, since $\Im(\omega_j/\omega_k)\neq0$ for $j\neq k$ and $n\geq2$,
for any $t$ with $|t|=1$ we find at least one $j$ for which 
$\Re (\omega_jt)\neq 0$ holds.
Then, using $0<y<1$ we see that 
$\Re (\omega_jt)y-\Re (\omega_jt)_+ < 0$, from which $T>0$ easily follows. 

  From \eqref{estim-M}, we see that the integral on the rightmost side of
\eqref{eq:int_rep_Barnes}
  converges to $0$ when the radius of the contour goes to 
  infinity if $0<y<1$ or, $y=0$ with $\Re s<0$.
 Namely, there is a sequence $R_l\to\infty$ such that
  \begin{equation}
    \lim_{l\to\infty}\int_{|t|=R_l}    \Bigl\lvert
    \prod_{j=1}^n F(\omega_j t,y)
    \Bigr||t^{s-n-1}||dt|=0.
  \end{equation}

  Hence we can calculate the integral by counting
  all the residues on the whole
  space.  Since by the assumption the poles of the integrand is all
  simple except the origin, 
% and
%   \begin{equation}
%     \Res_{t=2m\pi i\omega_k^{-1}}F(\omega_k t,y)=(2m\pi i\omega_k^{-1})e^{2m\pi i y},
%   \end{equation}
  we obtain %\eqref{eq:funceq_Barnes}.
\begin{multline}
%  \label{eq:funceq_Barnes}
    \zeta_n(s,a(y);\omega_1,\ldots,\omega_n) 
    \\
    \begin{aligned}
      &=\frac{1}{\Gamma(s)(e^{2\pi is}-1)}
      \int_{C(\theta)}
      \frac{e^{(\omega_1+\cdots+\omega_n)yt}}
      {(e^{\omega_1t}-1)\cdots(e^{\omega_nt}-1)}
      t^{s-1}dt
      \\
      &=-
      \frac{2\pi i}{\Gamma(s)(e^{2\pi is}-1)} 
      \sum_{k=1}^n
      \sum_{m\in\mathbb{Z}\setminus\{0\}} 
      \omega_k^{-1}
      \Bigl( 
      \prod_{\substack{j=1\\j\neq k}}^n 
      \frac{e^{(2m\pi i\omega_j/\omega_k)y}}
      {e^{2m\pi i\omega_j/\omega_k}-1}
      \Bigr) (2m\pi i\omega_k^{-1})^{s-1}e^{2m\pi i y},
    \end{aligned}
  \end{multline}
whose
absolute and uniform convergence follows 
from the explicit form of
the series.
Therefore we obtain
 \eqref{eq:funceq_Barnes}.
\end{proof}

\begin{proof}[Proof of Corollary \ref{thm:funceq_Barnes_0}]
We observe that
as $m\to+\infty$,
$F(2\pi im\omega_j/\omega_k,0)=O(m)$ if $j\in I_k^+$
while 
$F(2\pi im\omega_j/\omega_k,0)$ decays exponentially if $j\in I_k^-$.
Thus we see that 
if $I_k^-\neq\emptyset$,
the series 
\begin{equation*}
\sum_{m>0} \Bigl( \prod_{\substack{j=1\\j\neq k}}^n \frac{1} {e^{2m\pi
    i\omega_j/\omega_k}-1} \Bigr) (2m\pi i\omega_k^{-1})^{s-1}
\end{equation*}
converges absolutely uniformly
for whole $s\in\mathbb{C}$.

Next consider the case $I_k^-=\emptyset$. 
We have
\begin{multline}
  \sum_{m>0}
  \Bigl( 
  \prod_{\substack{j=1\\j\neq k}}^n 
  \frac{1}
  {e^{2m\pi i\omega_j/\omega_k}-1}
  \Bigr) (2m\pi i\omega_k^{-1})^{s-1}
  \\
  \begin{aligned}
%     &=
%    \sum_{m>0} (-1)^{n-1}
%    \Bigl(
%    \prod_{\substack{j=1\\j\neq k}}^n
%    (2m\pi i\omega_j/\omega_k)
%    \bigl(1+
%    \frac{1}{e^{-2m\pi i\omega_j/\omega_k}-1}
%    \bigr)
%    \Bigr)
%    (2m\pi i\omega_k^{-1})^{s-1}
%    \\
   &=
    \sum_{m>0} (-1)^{n-1}
%    \Bigl(
%    \prod_{\substack{j=1\\j\neq k}}^n
%    \omega_j
%    \Bigr)
    (2m\pi i\omega_k^{-1})^{s-1}
    \Bigl(
    \prod_{\substack{j=1\\j\neq k}}^n
    \bigl(1+
    \frac{1}{e^{-2m\pi i\omega_j/\omega_k}-1}
    \bigr)
    \Bigr)
    \\
    &=
    \sum_{m>0} (-1)^{n-1}
%     \Bigl(
%     \prod_{\substack{j=1\\j\neq k}}^n
%     \omega_j
%     \Bigr)
    (2m\pi i\omega_k^{-1})^{s-1}
    \Biggl(1+
    \sum_{\substack{J\subset\{1,\ldots,n\}\setminus\{k\}\\|J|\geq1}}
    \Bigl(
    \prod_{j\in J}
    \frac{1}{e^{-2m\pi i\omega_j/\omega_k}-1}
    \Bigr)
    \Biggr)
    \\
    &=
    (-1)^{n-1}
%     \Bigl(
%     \prod_{\substack{j=1\\j\neq k}}^n
%     \omega_j
%     \Bigr)
    (2\pi i\omega_k^{-1})^{s-1}
    \zeta(1-s)
    \\
    &\qquad+
    (-1)^{n-1}
%     \Bigl(
%     \prod_{\substack{j=1\\j\neq k}}^n
%     \omega_j
%     \Bigr)
    \sum_{\substack{J\subset\{1,\ldots,n\}\setminus\{k\}\\|J|\geq1}}
    \sum_{m>0} 
    (2m\pi i\omega_k^{-1})^{s-1}
    \Bigl(
    \prod_{j\in J}
    \frac{1}{e^{-2m\pi i\omega_j/\omega_k}-1}
    \Bigr).
  \end{aligned}
\end{multline}
Since all $j\in I_k^+$, 
we see that the rightmost side of the above can be
continued to the whole of $\mathbb{C}$, and is equal to
\begin{multline}
  (-1)^{n-1}
  % \Bigl(
  % \prod_{\substack{j=1\\j\neq k}}^n
  % \omega_j
  % \Bigr)
  (2\pi i\omega_k^{-1})^{s-1}
  \zeta(1-s)
  \\
  % &\qquad
  +
  % \Bigl(
  % \prod_{\substack{j=1\\j\neq k}}^n
  % \omega_j
  % \Bigr)
  \sum_{m>0}
  (2m\pi i\omega_k^{-1})^{s-1}
  \biggl(
  \Bigl(
  \prod_{\substack{j=1\\j\neq k}}^n
  \frac{1}{e^{2m\pi i\omega_j/\omega_k}-1}
  \Bigr)-(-1)^{n-1}\biggl).
\end{multline}
  
For the series with $m<0$, by exchanging the roles of $I_k^+$ and $I_k^-$,
we have the same type of conclusions as follows:
If $I_k^+\neq\emptyset$,
the series corresponding to $m<0$ converges 
absolutely uniformly
for whole $s\in\mathbb{C}$,
and if $I_k^+=\emptyset$,
\begin{multline}
  \sum_{m<0}
  \Bigl( 
  \prod_{\substack{j=1\\j\neq k}}^n 
  \frac{1}
  {e^{2m\pi i\omega_j/\omega_k}-1}
  \Bigr) (2m\pi i\omega_k^{-1})^{s-1}
%   \sum_{m<0}
%   \Bigl( 
%   \prod_{\substack{j=1\\j\neq k}}^n F(2m\pi i\omega_j/\omega_k,0) 
%   \Bigr) (2m\pi i\omega_k^{-1})^{s-n}
  \\
%   \begin{aligned}
%     &=
%     \sum_{m<0} 
%     \Bigl(
%     \prod_{\substack{j=1\\j\neq k}}^n
%     (2m\pi i\omega_j/\omega_k)
%     \frac{1}{e^{2m\pi i\omega_j/\omega_k}-1}
%     \Bigr)
%     (2m\pi i\omega_k^{-1})^{s-n}
%     \\
%     &
  =
    (-1)^{n-1}
%     \Bigl(
%     \prod_{\substack{j=1\\j\neq k}}^n
%     \omega_j
%     \Bigr)
    (-2\pi i\omega_k^{-1})^{s-1}
    \zeta(1-s)
    \\
%    &\qquad
    +
%     \Bigl(
%     \prod_{\substack{j=1\\j\neq k}}^n
%     \omega_j
%     \Bigr)
    \sum_{m>0}
    (-2m\pi i\omega_k^{-1})^{s-1}
    \biggl(
    \Bigl(
    \prod_{\substack{j=1\\j\neq k}}^n
    \frac{1}{e^{-2m\pi i\omega_j/\omega_k}-1}
    \Bigr)-(-1)^{n-1}\biggl).
%  \end{aligned}
\end{multline}
\end{proof}

%%%%%%%%%%%
\section{A general formulation}\label{sec-2}
%%%%%%%%%%%

In the previous sections, we established some relations between
Barnes zeta-functions and certain series
involving hyperbolic functions (see \eqref{eq:funceq_Barnes_omake}).  
In order to study this relationship further, it is convenient to introduce a
general framework to evaluate more general series.

Let $g(t)$ be a meromorphic function on $\mathbb{C}$ which has
possible poles only on $2\pi i\mathbb{Z}$. 
For example,
we will consider $g(t)=(t/2)/\sinh(t/2)$ (see \eqref{3-6}). 

% Let $C=\{z\in \mathbb{C}\,|\,|z|=\varepsilon\}$ be a
% sufficiently small circle with radius $\varepsilon>0$ around the
% origin going counterclockwise.
% Let $C(R)=\{z\in \mathbb{C}\,|\,|z|=R\}$ be the circle
%  with radius $R>0$ around the
% origin going counterclockwise.
% Let $\widetilde{C}_0$ be a path which starts at
% $+i\infty$, passes through the left of the imaginary axis, goes around
% the origin counterclockwise and comes back to $+i\infty$ passing
% through the right of the imaginary axis.  Let $n\in \mathbb{N}$ with
% $n\geq 2$ and $\eta= e^{\pi i/n}$, that is, the primitive $2n$-th root
% of unity.  Let $C_0 = \widetilde{C}_0 \vee (-\widetilde{C}_0)$ and
% $C_l = \eta^l C_0$ $(0\leq l \leq n-1)$.  Then in
% $\mathbb{C}\setminus\bigcup_{l=0}^{n-1}\bigcup_{m\in\mathbb{Z}}D(2m\pi i\eta^l,\varepsilon)$,
% we can deform $\bigvee_{l=0}^{n-1}C_l$ into a circle with radius
% $\lambda$ satisfying $\varepsilon<\lambda<2\pi$ around the origin
% going clockwise. Therefore $C \vee \bigvee_{l=0}^{n-1} \,C_l$ is
% homologous to $0$ in
% $\mathbb{C}\setminus\bigcup_{l=0}^{n-1}\bigcup_{m\in\mathbb{Z}}D(2m\pi i\eta^l,\varepsilon)$.
Let $n\in \mathbb{N}$ with
$n\geq 2$ and $\eta= e^{\pi i/n}$, that is, the primitive $2n$-th root
of unity.
Let
\begin{equation*}
G(t)=\prod_{j=1}^{n}g(\eta^jt). 
\end{equation*}
We assume that there exist
real numbers $\gamma_1$, $\gamma_2$ with $\gamma_1>0$ and a small positive number
$\varepsilon$ such that
\begin{equation*}
  |G(t)|\leq\gamma_1 |t|^{-\gamma_2}
\end{equation*}
for all $t\in\mathbb{C}\setminus\bigcup_{l=0}^{n-1}\bigcup_{m\in\mathbb{Z}}D(2m\pi i\eta^l,\varepsilon)$.
Then we have the following theorem, which is a simple consequence of residue
calculus, but is a key result in the present paper.

\begin{theorem}\label{T-2-1}
For $h \in \mathbb{Z}$ with $h+\gamma_2>1$,
\begin{equation}
\begin{split}
& %\lim_{R\to\infty}\sum_{0<|m|\leq R}
\sum_{m\in \mathbb{Z}\setminus \{0\}}
(2m\pi i)^{-h}\left(
\sum_{l=0}^{n-1}\eta^{l(1-h)}\prod_{\substack{j=1 \\ j+l \not=n}}^{n}
g(2m\pi i\eta^{j+l})\right)
\Res_{t=2m\pi i}g(-t) \\
& \quad = -\Res_{t=0}
\left\{t^{-h}\prod_{j=1}^{n}g(\eta^j t)\right\}.
\end{split}
\label{2-1}
\end{equation}
In particular when $g$ is an even function, 
\begin{equation}
\begin{split}
& \mathcal{Z}_h
\sum_{m\in \mathbb{Z}\setminus \{0\}}
%\lim_{R\to\infty}\sum_{0<|m|\leq R}
(2m\pi i)^{-h}\left(
\prod_{j=1}^{n-1}
g(2m\pi i\eta^{j})\right)
\Res_{t=2m\pi i}g(t) \\
&\quad = -\Res_{t=0}
\left\{t^{-h}\prod_{j=0}^{n-1}g(\eta^j t)\right\},
\end{split}
\label{2-2}
\end{equation}
where
\begin{equation}
\mathcal{Z}_h= \sum_{j=0}^{n-1}\eta^{j(1-h)}=
\begin{cases}
n & \text{if $h\equiv 1$ \ {\rm (mod $2n$)}}, \\
0 & \text{if $h\not\equiv 1$ \ {\rm (mod $2n$)} \ and $2\not|\,h$},\\
\frac{2}{1-\eta^{1-h}} & \textrm{if $h\not\equiv 1$ \ {\rm (mod $2n$)} \ and $2\,|\, h$}.
\end{cases}
\label{2-3}
\end{equation}
\end{theorem}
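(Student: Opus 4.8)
The plan is to apply the residue theorem to the meromorphic function
\begin{equation*}
H(t)=t^{-h}\prod_{j=1}^{n}g(\eta^j t),
\end{equation*}
whose poles are the origin together with the nonzero points of the sets $2\pi i\mathbb{Z}\,\eta^{-1},\ldots,2\pi i\mathbb{Z}\,\eta^{-n}$. First I would choose radii $R_\nu=(2\nu+1)\pi\to\infty$, so that the circle $|t|=R_\nu$ stays at distance at least $\pi-\varepsilon$ from every pole and hence lies in the region where $|G(t)|\le\gamma_1|t|^{-\gamma_2}$. Since $h\in\mathbb{Z}$ the factor $t^{-h}$ is single-valued, and on $|t|=R_\nu$ one has $|H(t)|\le\gamma_1 R_\nu^{-h-\gamma_2}$, so that $\bigl|\int_{|t|=R_\nu}H(t)\,dt\bigr|\le 2\pi\gamma_1 R_\nu^{1-h-\gamma_2}\to0$ because $h+\gamma_2>1$. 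Consequently the sum of all residues of $H$ vanishes; taking the nonzero poles in the order $|t_0|<R_\nu$, $\nu\to\infty$, this reads
\begin{equation*}
\Res_{t=0}H(t)+\sum_{t_0\neq0}\Res_{t=t_0}H(t)=0 .
\end{equation*}

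The core of the argument is a bookkeeping identity: each residue $\Res_{t=t_0}H$ at a nonzero pole equals exactly one summand of the left side of \eqref{2-1}. The ingredients are: (i) for $j\neq j'$ in $\{1,\ldots,n\}$ the number $\eta^{j-j'}$ is not real, so the pole sets of the factors $g(\eta^j t)$ are pairwise disjoint away from $0$; thus a nonzero pole of $H$ has the form $t_0=2k\pi i\eta^{-j_0}$ with $k\in\mathbb{Z}\setminus\{0\}$ and a unique $j_0\in\{1,\ldots,n\}$, and only the factor $g(\eta^{j_0}t)$ is singular there (and the pole of $H$ there is simple); (ii) the substitution $u=\eta^{j_0}t$ gives $\Res_{t=t_0}g(\eta^{j_0}t)=\eta^{-j_0}\Res_{u=2k\pi i}g(u)$, and $u\mapsto-u$ together with $\eta^{n}=-1$ turns $\Res_{u=2k\pi i}g(u)$ into $-\Res_{t=2m\pi i}g(-t)$ with $m=-k$; (iii) writing $l=n-j_0\in\{0,\ldots,n-1\}$ and using $\eta^n=-1$ to rewrite $t_0^{-h}$, $\eta^{j_0(h-1)}$, and the shifted arguments $2k\pi i\eta^{j-j_0}=2m\pi i\eta^{j+l}$, the accumulated signs $(-1)^{h}\cdot(-1)^{h-1}\cdot(-1)$ collapse to $1$. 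Carrying this out yields
\begin{equation*}
\Res_{t=t_0}H(t)=(2m\pi i)^{-h}\eta^{l(1-h)}
\Bigl(\prod_{\substack{j=1\\ j+l\neq n}}^{n}g(2m\pi i\eta^{j+l})\Bigr)
\Res_{t=2m\pi i}g(-t),
\end{equation*}
i.e.\ precisely the $(m,l)$-term of the left side of \eqref{2-1} (one checks that $\eta^{j+l}$ is non-real for $j\neq n-l$, so every factor in the product is finite). Summing over the nonzero poles and feeding $-\Res_{t=0}H$ into the right-hand side gives \eqref{2-1}.

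For even $g$ I would first observe $g(\eta^n t)=g(-t)=g(t)$, so $\prod_{j=1}^{n}g(\eta^j t)=\prod_{j=0}^{n-1}g(\eta^j t)$ and $\Res_{t=2m\pi i}g(-t)=\Res_{t=2m\pi i}g(t)$. The key simplification is that the inner product in \eqref{2-1} becomes independent of $l$: the exponents $\{j+l:1\le j\le n,\ j\neq n-l\}$ run, modulo $n$, through $\{1,\ldots,n-1\}$, and by evenness $g(2m\pi i\eta^s)$ depends only on $s\bmod n$, so $\prod_{j\neq n-l}g(2m\pi i\eta^{j+l})=\prod_{j=1}^{n-1}g(2m\pi i\eta^{j})$ for every $l$. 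Pulling this common factor out of the inner sum leaves the coefficient $\sum_{l=0}^{n-1}\eta^{l(1-h)}=\mathcal{Z}_h$, which is \eqref{2-2}. Finally \eqref{2-3} is the evaluation of this geometric sum: it equals $n$ when $\eta^{1-h}=1$, i.e.\ $h\equiv1\pmod{2n}$, and otherwise equals $\dfrac{\eta^{n(1-h)}-1}{\eta^{1-h}-1}=\dfrac{(-1)^{1-h}-1}{\eta^{1-h}-1}$, which is $0$ for odd $h$ and $\dfrac{2}{1-\eta^{1-h}}$ for even $h$.

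The routine parts are the contour estimate and the geometric sum; the real work is step (iii), namely finding the correct simultaneous reindexing of the $n$ singular families and tracking the powers of $\eta$ (via $\eta^n=-1$) and of $-1$ carefully enough that each residue lands on the prescribed $(m,l)$-summand rather than on a permuted or sign-twisted variant of it.
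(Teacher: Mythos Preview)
Your proof is correct and follows essentially the same route as the paper's: apply the residue theorem to $t^{-h}G(t)$ on expanding circles, use the growth bound on $G$ to kill the boundary integral, and identify each nonzero residue with a summand of \eqref{2-1}. The only cosmetic difference is bookkeeping: the paper parametrizes the nonzero poles directly as $t=2m\pi i\eta^l$ with $l\in\{0,\ldots,n-1\}$ and substitutes $t\mapsto\eta^l t$ to reduce to a residue at $2m\pi i$, whereas you parametrize by the singular factor index $j_0$ and then reindex via $(m,l)=(-k,n-j_0)$; both land on the same $(m,l)$-term. For the even case, your observation that the inner product is $l$-independent is exactly the paper's statement that $G(\eta^l t)=G(t)$.
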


\begin{proof}
Let $h \in \mathbb{Z}$ with $h+\gamma_2>1$.
%and $C \vee \bigvee_{l=0}^{n-1} \,C_l$ is homologous to $0$, 
 For $R\in\mathbb{N}$, 
we have
\begin{equation}
\begin{split}
\Res_{t=0}\left\{ G(t)t^{-h}\right\}&= \frac{1}{2\pi i}\int_{|t|=2\varepsilon}G(t)t^{-h}dt \\
% \Res_{t=0}\left\{ G(t)t^{-h}\right\}&= \frac{1}{2\pi i}\int_{C(2\varepsilon)}G(t)t^{-h}dt \\
%  & =\frac{1}{2\pi i}\sum_{l=0}^{n-1}\int_{\eta^l C_0}G(t)t^{-h}dt \\
%  & =\frac{1}{2\pi i}\sum_{l=0}^{n-1}\int_{C_0}G(\eta^l t)(\eta^l t)^{-h}\eta^{l}dt \\
 & =
-\sum_{l=0}^{n-1}\sum_{0<|m|\leq R} \Res_{t=2m\pi i\eta^l}\left\{G(t)t^{-h}\right\}
+\frac{1}{2\pi i}\int_{|t|=2\pi R+2\varepsilon}G(t)t^{-h}dt 
\\
 & =
-\sum_{l=0}^{n-1}\sum_{0<|m|\leq R} \Res_{t=2m\pi i}\left\{G(\eta^l t)(\eta^lt)^{-h}\eta^l\right\}
+\frac{1}{2\pi i}\int_{|t|=2\pi R+2\varepsilon}G(t)t^{-h}dt.
\end{split}
\end{equation}
%where we took all residues for $2\varepsilon<|t|<2\pi R+2\varepsilon$.
%by enlarging the circle
% by taking residues 
Since $G(t)t^{-h}\leq\gamma_1|t|^{-h-\gamma_2}$,
we have
\begin{equation}
  \begin{split}
  \Bigl\lvert\int_{|t|=2\pi R+2\varepsilon}G(t)t^{-h}dt\Bigr\rvert
  &\leq
  \int_{|t|=2\pi R+2\varepsilon}|G(t)t^{-h}||dt|
  \\
  &\leq
  \gamma_1\int_{|t|=2\pi R+2\varepsilon}|t|^{-h-\gamma_2}|dt|
  \\
  &\leq 2\pi\gamma_1(2\pi R+2\varepsilon)^{-h-\gamma_2+1}
  \\
  &\to 0
\end{split}
\end{equation}
as $R\to\infty$.
Hence by letting $R\to\infty$, we obtain
\begin{equation}
  \Res_{t=0}\left\{ G(t)t^{-h}\right\}
% & =-\lim_{R\to\infty}\sum_{l=0}^{n-1}\eta^{l(1-h)}\sum_{0<|m|\leq R} \Res_{t=2m\pi i}\left\{t^{-h}G(\eta^l t)\right\},
   =-\sum_{l=0}^{n-1}\eta^{l(1-h)}\sum_{m\in \mathbb{Z}\setminus \{ 0\}} \Res_{t=2m\pi i}\left\{G(\eta^l t)t^{-h}\right\}
%\label{2-4}
\end{equation}
%where we used the fact that
because
%by $G(t)t^{-h}\leq\gamma_1|t|^{-h-\gamma_2}$,
%we have
% \begin{equation}
%   \begin{split}
%   \Bigl\lvert\int_{|t|=2\pi R+2\varepsilon}G(t)t^{-h}dt\Bigr\rvert
%   &\leq
%   \int_{|t|=2\pi R+2\varepsilon}|G(t)t^{-h}||dt|
%   \\
%   &\leq
%   \gamma_1\int_{|t|=2\pi R+2\varepsilon}|t|^{-h-\gamma_2}|dt|
%   \\
%   &\leq 2\pi\gamma_1(2\pi R+2\varepsilon)^{-h-\gamma_2+1}
%   \\
%   &\overset{R\to\infty}{\to}0
% \end{split}
% \end{equation}
% and
\begin{equation}
  \begin{split}
    \Bigl\lvert\Res_{t=2m\pi i}G(\eta^lt)t^{-h}\Bigr\rvert
    &\leq\frac{1}{2\pi}\int_{|t-2m\pi i|=2\varepsilon}|G(\eta^lt)t^{-h}||dt|
    \\
    &\leq\frac{\gamma_1}{2\pi}\int_{|t-2m\pi i|=2\varepsilon}|t|^{-h-\gamma_2}|dt|
    \\
    &\leq 
    2\varepsilon\gamma_1(2\pi |m|-2\varepsilon)^{-h-\gamma_2}
  \end{split}
\end{equation}
and hence the convergence is absolute.

Since $\eta$ is the primitive $2n$-th root of unity, we see that for
$l\in \mathbb{Z}$ with $0\leq l\leq n-1$, the residue of
$$t^{-h}G(\eta^l t)=t^{-h}\prod_{j=1}^{n}g( \eta^{j+l}t)$$
at $t=2m\pi i$ is equal to
$$(2m\pi i)^{-h} \prod_{\substack{j=1 \\ j+l\not=n}}^{n}g( 2m\pi i\eta^{j+l})\times \Res_{t=2m\pi i}g(-t),$$
which gives \eqref{2-1}. 

In particular when $g$ is even, we have $G(\eta^lt)=
G(t)$ $(0\leq l\leq n-1)$, because $g(\eta^{r}t)=g(-\eta^{r-n}t)=g(\eta^{r-n}t)$ for $n+1\leq r<2n$. Therefore
$$\sum_{l=0}^{n-1}\eta^{l(1-h)}\sum_{m\in \mathbb{Z}\setminus \{ 0\}} 
\Res_{t=2m\pi i}\left\{t^{-h}G(\eta^l t)\right\}=\mathcal{Z}_h 
\sum_{m\in \mathbb{Z}\setminus \{ 0\}} \Res_{t=2m\pi i}\left\{t^{-h}G(t)\right\}.$$
This completes the proof.
\end{proof}

The same idea as in the above proof is also used in \cite{KN} in a
different situation.  It is to be noted that the original
method of Cauchy \cite{Ca} is essentially similar.

%%%%%%%%%%%%%%%%%%%%%%%%%%%%%%
\section{Explicit formulas} \label{sec-3}
%%%%%%%%%%%%%%%%%%%%%%%%%%%%%%

We recall the Bernoulli polynomials $\{B_j(y)\}$ defined by \eqref{Ber-poly}:
\begin{equation*}
F(t,y)=\frac{te^{ty}}{e^t-1}=\sum_{j=0}^\infty B_j(y)\frac{t^j}{j!}.
\end{equation*}
For $y \in \mathbb{R}$, let 
\begin{equation}
\begin{split}
H(t,y)& =\frac{F(t,y)+F(-t,y)}{2} =\frac{t}{2}\frac{e^{t(y-1/2)}+e^{-t(y-1/2)}}{e^{t/2}-e^{-t/2}}\\
      & =\frac{t}{2}\frac{\cosh(t(y-1/2))}{\sinh(t/2)}.
\end{split}
\label{3-2}
\end{equation}
Then we see that 
\begin{equation}
H(t,y)=\sum_{m=0}^\infty B_{2m}\left(y \right)\frac{t^{2m}}{(2m)!}. \label{3-3}
\end{equation}
It follows from \eqref{3-2} that $H(t,y)$ has simple poles at $t=2m\pi
i$ $(m\in \mathbb{Z}\setminus \{0\})$ and its residue is
\begin{equation}
\Res_{t=2m\pi i} H(t,y)=\frac{2m\pi i}{2} \frac{e^{2m\pi i(y-1/2)}+e^{-2m\pi i(y-1/2)}}{(-1)^m} =2m\pi i\cos(2m\pi y). \label{3-4}
\end{equation}
By \eqref{2-2}, we have the following result which includes the known
formulas \eqref{eq-1-2} and \eqref{eq-1-3} given by Cauchy, Mellin,
Ramanujan and so on (see Section \ref{sec-1}).

\begin{theorem} \label{P-3-1}
Assume $0<y<1$ and $p \in \mathbb{Z}$,
or $y=0,1$ and $p>n/2$.
For $n \in \mathbb{N}$ with $n\geq 2$ and $\eta=e^{\pi i/n}$, 
\begin{multline}
  \mathcal{Z}_{2p+1}\sum_{m\in \mathbb{Z}\setminus \{0\}}
\frac{\cos(2m\pi y)}{m^{2p+1-n}} \left(
\prod_{j=1}^{n-1}
\frac{\cosh\left( 2m\pi i\eta^{j}(y-1/2)\right)}{\sinh\left( m\pi i\eta^{j}\right)}\right)
 \\
 = -\frac{2^{n-1}(2\pi i)^{2p+1-n}}{\eta^{n(n-1)/2}}
\sum_{\substack{m_1,\ldots,m_n \geq 0 \\ m_1+\cdots+m_n=p}} \prod_{\nu=1}^{n} \frac{B_{2m_\nu}(y)}{(2m_\nu) !}\eta^{2(\nu-1)m_\nu},
\label{3-5}
\end{multline}
where $\mathcal{Z}_h$ is defined by \eqref{2-3}. 
Furthermore, 
assume $0<y<1$ and $p\geq n/2$. %and $p>n/2$ if $y=0,1$.
Then the both sides of \eqref{3-5} is also equal to
\begin{equation}
\label{eq:sum_zetas}
%\frac{(2\pi i)^{2p+1-n}(-1)^{2p+1-n}}{2(2p-n)!}
\frac{(2\pi i)^{2p+1-n}(-1)^{1-n}}{2(2p-n)!}
  \sum_{y_0\in\{y,1-y\}}\cdots
  \sum_{y_{n-1}\in\{y,1-y\}}
  \zeta_n(n-2p,\sum_{j=0}^{n-1}\eta^jy_j;1,\eta,\ldots,\eta^{n-1}).
\end{equation}
\end{theorem}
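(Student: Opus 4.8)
The plan is to apply Theorem~\ref{T-2-1} to the specific choice $g(t)=H(t,y)=(t/2)\cosh(t(y-1/2))/\sinh(t/2)$ and then to convert the right-hand side residue into both the Bernoulli-polynomial expression \eqref{3-5} and the Barnes-zeta expression \eqref{eq:sum_zetas}. First I would verify that $g=H(\cdot,y)$ satisfies the hypotheses of Theorem~\ref{T-2-1}: it is meromorphic with poles only on $2\pi i\mathbb{Z}$, it is even in $t$ (clear from \eqref{3-2}), and the growth bound $|G(t)|\le\gamma_1|t|^{-\gamma_2}$ holds away from $\varepsilon$-disks around the poles, where here $G(t)=\prod_{j=0}^{n-1}H(\eta^j t,y)$. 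For $0<y<1$ each factor $H(\eta^j t,y)$ is $O(|t|)e^{(\Re(\eta^jt))(y-1/2)_{\mathrm{sgn}}-\ldots}$; more precisely $|\cosh(w(y-1/2))/\sinh(w/2)|$ decays like $e^{-|\Re w|(1/2-|y-1/2|)}$, so one gets exponential decay in $|t|$ (so any $\gamma_2$ works, hence $p\in\mathbb{Z}$ is unrestricted), whereas for $y=0$ or $y=1$ each factor is merely $O(|t|)$ and $G(t)=O(|t|^n)$, forcing $\gamma_2=-n$ and hence the condition $h+\gamma_2>1$ becomes $2p+1-n>1$, i.e. $p>n/2$. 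This reproduces exactly the stated hypotheses.

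Next I would substitute into \eqref{2-2} with $h=2p+1$. By \eqref{3-4}, $\Res_{t=2m\pi i}H(t,y)=2m\pi i\cos(2m\pi y)$, and $g(2m\pi i\eta^j)=H(2m\pi i\eta^j,y)=m\pi i\eta^j\cosh(2m\pi i\eta^j(y-1/2))/\sinh(m\pi i\eta^j)$ using $\sinh(m\pi i\eta^j)=\sinh((2m\pi i\eta^j)/2)$ and $\cosh(2m\pi i\eta^j(y-1/2))$ in the numerator after cancelling $2m\pi i\eta^j/2$. Collecting the powers of $2m\pi i$ and of $\eta$: the factor $(2m\pi i)^{-h}$ times $\prod_{j=1}^{n-1}(2m\pi i\eta^j \cdot \tfrac12)$ times $2m\pi i$ from the residue yields $(2m\pi i)^{n-1-h}\cdot 2^{-(n-1)}\cdot\eta^{1+2+\cdots+(n-1)}=(2\pi i)^{n-2p-1}m^{n-2p-1}2^{1-n}\eta^{n(n-1)/2}$, and the $\cos(2m\pi y)$ survives. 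On the right, Theorem~\ref{T-2-1} gives $-\Res_{t=0}\{t^{-h}\prod_{j=0}^{n-1}g(\eta^j t)\}$; expanding each $H(\eta^j t,y)=\sum_{m_j\ge0}B_{2m_j}(y)(\eta^jt)^{2m_j}/(2m_j)!$ via \eqref{3-3} and extracting the coefficient of $t^{h-1}=t^{2p}$ gives $\sum_{m_0+\cdots+m_{n-1}=p}\prod_{\nu}B_{2m_\nu}(y)\eta^{2\nu m_\nu}/(2m_\nu)!$. Reindexing $\nu\mapsto\nu-1$ and rearranging all the constants (moving $\eta^{n(n-1)/2}$, the $2^{n-1}$ and $(2\pi i)^{2p+1-n}$ to the right) yields \eqref{3-5}; I would do this bookkeeping carefully since sign and normalization errors are the obvious pitfall here.

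For the second assertion — equality with \eqref{eq:sum_zetas} under the extra hypothesis $0<y<1$, $p\ge n/2$ — I would use Corollary~\ref{cor:spvals_Barnes}, or rather its underlying identity between \eqref{eq:Ber} and the residue \eqref{eq:res_Barnes}, applied with $\omega_j=\eta^{j-1}$ ($j=1,\ldots,n$) and parameter $y$. The point is that $H(t,y)=\tfrac12(F(t,y)+F(-t,y))$, so $\prod_{j=0}^{n-1}H(\eta^jt,y)=2^{-n}\prod_{j=0}^{n-1}(F(\eta^jt,y)+F(\eta^jt,1-y))$, using the elementary identity $F(-t,y)=F(t,1-y)$ (which follows from $B_m(1-y)=(-1)^mB_m(y)$, equivalently directly from the generating function). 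Expanding the product over the two choices $y_j\in\{y,1-y\}$ for each index converts $\Res_{t=0}\{t^{-2p-1}\prod_j H(\eta^jt,y)\}$ into $2^{-n}\sum_{y_0,\ldots,y_{n-1}\in\{y,1-y\}}\Res_{t=0}\{t^{-2p-1}\prod_{j=0}^{n-1}F(\eta^jt,y_j)\}$. Each summand, by \eqref{eq:res_Barnes} with $l=2p$ and $\prod_j\omega_j^{-1}=\prod_{j=0}^{n-1}\eta^{-j}=\eta^{-n(n-1)/2}$, equals $-\eta^{n(n-1)/2}$ times the Bernoulli-sum form \eqref{eq:Ber}, which by Corollary~\ref{cor:spvals_Barnes} (the case $l\ge n$, valid since $2p\ge n$) equals $-\eta^{n(n-1)/2}\cdot\frac{(-1)^{2p-n+1}}{(2p-n)!}\zeta_n(n-2p,a(y_0,\ldots,y_{n-1});1,\eta,\ldots,\eta^{n-1})$ — where crucially the shifted argument $a(y_0,\ldots,y_{n-1})=\sum_j\omega_j(1-y_j)=\sum_j\eta^j(1-y_j)$ is, summed over all sign choices, the same as $\sum_j\eta^jy_j$ since $\{y_j\}$ and $\{1-y_j\}$ both range over $\{y,1-y\}$ (so we may rename $1-y_j\to y_j$). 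Assembling: $-\Res_{t=0}\{\cdots\}=2^{-n}\sum_{y_j}\eta^{n(n-1)/2}\frac{(-1)^{n+1}}{(2p-n)!}\zeta_n(\cdots)$, and then multiplying by the constant $-\frac{2^{n-1}(2\pi i)^{2p+1-n}}{\eta^{n(n-1)/2}}$ from \eqref{3-5} produces precisely $\frac{(2\pi i)^{2p+1-n}(-1)^{1-n}}{2(2p-n)!}\sum_{y_j}\zeta_n(\cdots)$, which is \eqref{eq:sum_zetas} (using $(-1)^{n+1}=(-1)^{1-n}$).

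The main obstacle is the second half: one must be careful that the hypotheses of Corollary~\ref{cor:spvals_Barnes} genuinely apply to $\omega_j=\eta^{j-1}$, i.e. that all $\omega_j$ lie in a common half-plane $H(\theta)$ and that $\Im(\omega_j/\omega_k)\ne0$ for $j\ne k$. The latter holds because $\omega_j/\omega_k=\eta^{j-k}$ is a nonreal root of unity for $0<|j-k|<n$ (here $\eta^{\pm n}=-1$ is excluded since $|j-k|\le n-1$). The common-half-plane condition needs a small argument: $\{1,\eta,\ldots,\eta^{n-1}\}$ are the $2n$-th roots of unity with arguments $0,\pi/n,\ldots,(n-1)\pi/n$, spanning an arc of length $(n-1)\pi/n<\pi$, so they do lie strictly inside an open half-plane — one takes $\theta=(n-1)\pi/(2n)$. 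I would also double-check the case distinction $l\ge n$ vs.\ $l<n$ in Corollary~\ref{cor:spvals_Barnes}: the hypothesis $p\ge n/2$ gives $l=2p\ge n$, so we are in the $\zeta_n(n-l,\ldots)$ branch rather than the residue branch, consistent with the fact that for $p\ge n/2$ the value $n-2p\le0$ is a regular point of $\zeta_n$. Everything else is the residue-extraction bookkeeping sketched above, which is routine once the normalizations are pinned down.
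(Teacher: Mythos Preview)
Your proposal is correct and follows essentially the same route as the paper: apply \eqref{2-2} with $g(t)=H(t,y)$ and $h=2p+1$ to obtain \eqref{3-5}, then for \eqref{eq:sum_zetas} expand $H(t,y)=\tfrac12(F(t,y)+F(t,1-y))$ and convert each resulting residue $\Res_{t=0}\{t^{-2p-1}\prod_j F(\eta^j t,y_j)\}$ to a Barnes zeta-value. The one nuance you already flag---that Corollary~\ref{cor:spvals_Barnes} is literally stated only for equal $y_j$---is handled in the paper exactly as you suggest, namely by going back to the integral representation \eqref{eq:int_rep_Barnes} (valid for arbitrary $y_j\in(0,1)$), deforming the small circle to the Hankel contour $C(\theta_n)$ with $\theta_n=\pi/2-\pi/(2n)=(n-1)\pi/(2n)$, and applying \eqref{gamma-residue}.
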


\begin{proof}
  Since $H(-t,y)=H(t,y)$, we can apply \eqref{2-2} with $g(t)=H(t,y)$
  and $h=2p+1$.  In this case $\gamma_2$ is arbitrarily large for
  $0<y<1$ and $\gamma_2=-n$ for $y=0,1$, and hence the condition
  $h+\gamma_2>1$ is satisfied because we assume $p>n/2$ if $y=0,1$.
  By \eqref{3-3}, we have
  \begin{equation*}
    H(\eta^j t,y)=\sum_{m=0}^\infty B_{2m}(y)\frac{\eta^{2jm}t^{2m}}{(2m)!}.
  \end{equation*}
  Hence we obtain 
  \begin{equation*}
    \Res_{t=0}\left\{t^{-2p-1}\prod_{j=0}^{n-1}H(\eta^j t,y)\right\} 
    =\sum_{\substack{m_1,\ldots,m_n\geq 0 \\ m_1+\cdots+m_n=p}} \prod_{\nu=1}^{n}\frac{B_{2m_\nu}(y)}{(2m_\nu)!}\eta^{2(\nu-1)m_\nu}.
  \end{equation*}
  Therefore, by using \eqref{3-2} and \eqref{3-4}, we have
\begin{equation*}
\begin{split}
& \mathcal{Z}_{2p+1} \sum_{m\in \mathbb{Z}\setminus \{0\}}
(2m\pi i)^{-2p-1}\left(
\prod_{j=1}^{n-1}
m\pi i\eta^{j}\frac{\cosh\left( 2m\pi i\eta^{j}(y-1/2)\right)}{\sinh\left( m\pi i\eta^{j}\right)}\right)
2m\pi i\cos(2m\pi y) \\
& \quad = -\sum_{\substack{m_1,\ldots,m_n \geq 0 \\ m_1+\cdots+m_n=p}} \prod_{\nu=1}^{n} \frac{B_{2m_\nu}(y)}{(2m_\nu) !}\eta^{2(\nu-1)m_\nu}.
\end{split}
\end{equation*}
Thus we obtain \eqref{3-5}. 

Assume $0<y<1$ and $p\geq n/2$. % and $p>n/2$ if $y=0,1$.
Then $h=2p+1\geq n+1$. % if $0<y<1$ and $h>n+1$ if $y=0,1$.
% We see that \eqref{3-5} can be
% regarded as a sum of special values of Barnes zeta-functions.
% In the following, we show it.
%First 
Note that
\begin{equation*}
  F(-t,y)=\frac{-te^{-ty}}{e^{-t}-1}=\frac{te^{t(1-y)}}{e^t-1}
=F(t,1-y).
\end{equation*}
Then we have
\begin{equation}
  \begin{split}
  G(t)&=\prod_{j=0}^{n-1}H(\eta^j t,y)=
  \prod_{j=0}^{n-1}\frac{F(\eta^j t,y)+F(\eta^j t,1-y)}{2}
  \\
  &=2^{-n}\sum_{y_0\in\{y,1-y\}}\cdots
  \sum_{y_{n-1}\in\{y,1-y\}}
  \prod_{j=0}^{n-1}F(\eta^j t,y_j).
\end{split}
\end{equation}
Hence
\begin{multline}
\label{namaewotuketa}
\begin{aligned}
  &\Res_{t=0}\left\{G(t)t^{-h}\right\}\\
%  \begin{aligned}
  &\quad=\frac{2^{-n}}{2\pi i}
  \sum_{y_0\in\{y,1-y\}}\cdots
  \sum_{y_{n-1}\in\{y,1-y\}}
  \int_{|t|=\varepsilon}
  \Bigl(\prod_{j=0}^{n-1}F(\eta^j t,y_j)\Bigr)
  t^{(n+1-h)-n-1}dt.
\end{aligned}
\end{multline}
Since $0<y<1$, we see that 
$1,\eta,\ldots,\eta^{n-1}$ and $\sum_{j=0}^{n-1}\eta^j(1-y_j)$ are belonging to
the half plane $H(\theta_n)$, where $\theta_n=\pi/2-\pi/(2n)$.   Therefore,
deforming the path $|t|=\varepsilon$ to $C(\theta_n)$, we find that each integral
on the right-hand side of \eqref{namaewotuketa} is of the same form as the
integral on the right-hand side of \eqref{eq:int_rep_Barnes} with 
$s=n+1-h$.   Using \eqref{eq:int_rep_Barnes} and \eqref{gamma-residue},
we obtain that the right-hand side of \eqref{namaewotuketa} is equal to
\begin{equation*}
  \begin{split}
    &\frac{2^{-n}\eta^{n(n-1)/2}}{2\pi i}
    \sum_{y_0\in\{y,1-y\}}\cdots
    \sum_{y_{n-1}\in\{y,1-y\}}
    \\
    &\qquad
    % \lim_{s\to n+1-h}
    % \Gamma(s)(e^{2\pi is}-1)
    \frac{2\pi i(-1)^{h-n-1}}{(h-n-1)!}
    \zeta_n(n+1-h,\sum_{j=0}^{n-1}\eta^j(1-y_j);1,\eta,\ldots,\eta^{n-1})
    \\
    &=
    \frac{2^{-n}(-1)^{h-n-1}\eta^{n(n-1)/2}}{(h-n-1)!}
    \\
    &\qquad\times
    \sum_{y_0\in\{y,1-y\}}\cdots
    \sum_{y_{n-1}\in\{y,1-y\}}
    \zeta_n(n+1-h,\sum_{j=0}^{n-1}\eta^jy_j;1,\eta,\ldots,\eta^{n-1}),
  \end{split}
\end{equation*}
which implies \eqref{eq:sum_zetas}.
\end{proof}

%%%%%%%%%%%%%%%%%%%%
In particular when $y=\frac{1}{2}$ in \eqref{3-5} and
\eqref{eq:sum_zetas}, we have the following formula, which can be
regarded as a multiple generalization of \eqref{eq-1-2}.

\begin{corollary} \label{C-3-1}
For $p \in \mathbb{Z}$, $n \in \mathbb{N}$ with $n\geq 2$ and $\eta=e^{\pi i/n}$, 
\begin{equation}
\begin{split}
&  \mathcal{Z}_{2p+1}\sum_{m\in \mathbb{Z}\setminus \{0\}}
\frac{(-1)^m}{\left( \prod_{j=1}^{n-1}{\sinh\left( m\pi i\eta^{j}\right)}\right)m^{2p+1-n}} \\
& \quad = -\frac{2^{n-1}(2\pi i)^{2p+1-n}}{\eta^{n(n-1)/2}}
\sum_{\substack{m_1,\ldots,m_n \geq 0 \\ m_1+\cdots+m_n=p}} \prod_{\nu=1}^{n} \frac{B_{2m_\nu}(1/2)}{(2m_\nu) !}\eta^{2(\nu-1)m_\nu}.
\end{split}
\label{3-6}
\end{equation}
Furthermore if $p\geq n/2$, then \eqref{3-6} is equal to
\begin{equation}
  \frac{(2\pi i)^{2p+1-n}(-1)^{1-n}2^{n-1}}{2(2p-n)!}
  \zeta_n(n-2p,1/(1-\eta);1,\eta,\ldots,\eta^{n-1}).
\end{equation}
\end{corollary}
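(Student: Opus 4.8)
The plan is to obtain Corollary~\ref{C-3-1} from Theorem~\ref{P-3-1} by specializing to $y=\tfrac12$. This is legitimate since $\tfrac12\in(0,1)$, so the hypothesis ``$0<y<1$'' of Theorem~\ref{P-3-1} holds for every $p\in\mathbb{Z}$, while the extra hypothesis $p\geq n/2$ needed for the $\zeta_n$-form is exactly the one imposed in the last part of the corollary.

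Setting $y=\tfrac12$ in \eqref{3-5} turns $\cos(2m\pi y)$ into $\cos(m\pi)=(-1)^m$ and $\cosh\bigl(2m\pi i\eta^{j}(y-\tfrac12)\bigr)$ into $\cosh 0=1$, so the left-hand side of \eqref{3-5} collapses to the left-hand side of \eqref{3-6}, while $B_{2m_\nu}(y)$ on the right becomes $B_{2m_\nu}(1/2)$; this is the first assertion. For the final $\zeta_n$-expression I would put $y=\tfrac12$ in \eqref{eq:sum_zetas}, the one delicate point being the behaviour of the $n$-fold sum $\sum_{y_0\in\{y,1-y\}}\cdots\sum_{y_{n-1}\in\{y,1-y\}}$ as $y\to\tfrac12$: in the proof of Theorem~\ref{P-3-1} it arises from expanding $\prod_{j=0}^{n-1}\bigl(F(\eta^{j}t,y)+F(\eta^{j}t,1-y)\bigr)/2$ over the $2^{n}$ choices of sign, and at $y=\tfrac12$ all of these coincide and force $y_j=\tfrac12$, so the sum reduces to $2^{n}$ identical terms, each equal to $\zeta_n\bigl(n-2p,\ \tfrac12\sum_{j=0}^{n-1}\eta^{j};\ 1,\eta,\ldots,\eta^{n-1}\bigr)$. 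Since $\eta^{n}=e^{\pi i}=-1$, we have $\sum_{j=0}^{n-1}\eta^{j}=(1-\eta^{n})/(1-\eta)=2/(1-\eta)$, hence $\tfrac12\sum_{j=0}^{n-1}\eta^{j}=1/(1-\eta)$; combining this with the factor $2^{n}$ and the prefactor in \eqref{eq:sum_zetas} yields the stated closed form.

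An alternative route for the last part, bypassing the proof of Theorem~\ref{P-3-1}, is to invoke Corollary~\ref{cor:spvals_Barnes} directly with $\omega_j=\eta^{j-1}$ $(1\leq j\leq n)$, $y=\tfrac12$, and $l=2p$ (which is $\geq n$ by hypothesis): because $B_{k}(1/2)=(2^{1-k}-1)B_{k}$ vanishes for every odd $k\geq1$, only tuples with all $m_j$ even contribute to $\sum_{m_1+\cdots+m_n=2p}\prod_{j}\frac{B_{m_j}(1/2)}{m_j!}\omega_j^{m_j-1}$, and the substitution $m_j=2\mu_j$ identifies this, up to an explicit power of $\eta$, with the right-hand side of \eqref{3-6}, while \eqref{eq:spvals_Barnes} evaluates the same sum as an explicit constant times $\zeta_n(n-2p,a(1/2);1,\eta,\ldots,\eta^{n-1})$ with $a(1/2)=\tfrac12\sum_{j=0}^{n-1}\eta^{j}=1/(1-\eta)$. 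In either approach the only genuine task is tracking the constants --- the powers of $2$ (conveniently via $(\pi i)^{n-1}=(2\pi i)^{n-1}/2^{n-1}$), the factor $\eta^{n(n-1)/2}$, the sign $(-1)^{1-n}$, and the factorial $(2p-n)!$ arising from \eqref{gamma-residue} at the non-positive integer $n-2p$ --- and I expect this bookkeeping to be the only obstacle; I would verify the normalization on a small case such as $n=2$, $p=2$ before settling it. There is no conceptual difficulty, since all the analytic content is already contained in Theorem~\ref{P-3-1} (equivalently Corollary~\ref{cor:spvals_Barnes}).
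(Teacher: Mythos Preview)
Your proposal is correct and follows exactly the paper's approach: the paper introduces Corollary~\ref{C-3-1} with the sentence ``In particular when $y=\tfrac12$ in \eqref{3-5} and \eqref{eq:sum_zetas}\ldots'' and gives no further argument, so the specialization you describe is precisely what is intended. Your explicit treatment of the $2^n$-fold sum at $y=\tfrac12$ and the identity $\tfrac12\sum_{j=0}^{n-1}\eta^{j}=1/(1-\eta)$ (via $\eta^n=-1$) simply spell out details the paper leaves to the reader; the alternative route through Corollary~\ref{cor:spvals_Barnes} is also valid but not something the paper invokes separately.
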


Now we consider the case $n=2$, $\eta=e^{\pi i/2}=i$ and $p=2k+2$ in \eqref{3-5}. Then we obtain the following.

\begin{corollary} \label{C-3-2}
For $k \in \mathbb{N}_0$ and $0\leq y\leq 1$,
\begin{equation}
\begin{split}
&  \sum_{m\in \mathbb{Z}\setminus \{0\}}
\frac{\cos(2m\pi y)}{m^{4k+3}} \left(
\frac{\cosh\left( 2m\pi (y-1/2)\right)}{\sinh\left( m\pi \right)}\right)
 \\
& \quad = {(2\pi)^{4k+3}}\sum_{j=0}^{2k+2}(-1)^{j+1} \frac{B_{2j}(y)}{(2j)!}\frac{B_{4k+4-2j}(y)}{(4k+4-2j)!}.
\end{split}
\label{3-7}
\end{equation}
In particular when $y=0$ and $y=\frac{1}{2}$, we obtain \eqref{eq-1-3} and \eqref{eq-1-2}, respectively.
\end{corollary}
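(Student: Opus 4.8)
The plan is to specialize Theorem \ref{P-3-1} to the case $n=2$, $\eta=e^{\pi i/2}=i$, and $p=2k+2$. First I would compute the factor $\mathcal{Z}_{2p+1}=\mathcal{Z}_{4k+5}$ from \eqref{2-3}. Here $2n=4$, and $4k+5\equiv 1\pmod 4$, so we are in the first case of \eqref{2-3} and $\mathcal{Z}_{4k+5}=n=2$. Next I would simplify the left-hand side of \eqref{3-5}: with $n=2$ the product $\prod_{j=1}^{n-1}$ has the single term $j=1$, giving $\cosh(2m\pi i\cdot i(y-1/2))/\sinh(m\pi i\cdot i)=\cosh(-2m\pi(y-1/2))/\sinh(-m\pi)=-\cosh(2m\pi(y-1/2))/\sinh(m\pi)$ using that $\cosh$ is even and $\sinh$ is odd. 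The exponent on $m$ is $2p+1-n=4k+3$. So the left side of \eqref{3-5} becomes
\[
-2\sum_{m\in\mathbb{Z}\setminus\{0\}}\frac{\cos(2m\pi y)}{m^{4k+3}}\frac{\cosh(2m\pi(y-1/2))}{\sinh(m\pi)}.
\]

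Then I would work out the right-hand side of \eqref{3-5}. With $n=2$ we have $\eta^{n(n-1)/2}=i^{1}=i$ and $(2\pi i)^{2p+1-n}=(2\pi i)^{4k+3}=(2\pi)^{4k+3}i^{4k+3}=(2\pi)^{4k+3}\cdot i^{3}=-i(2\pi)^{4k+3}$ (since $i^{4k}=1$). The Bernoulli sum, with $m_1+m_2=p=2k+2$ and $\nu$ running over $1,2$, is $\sum_{j=0}^{2k+2}\frac{B_{2j}(y)}{(2j)!}\frac{B_{4k+4-2j}(y)}{(4k+4-2j)!}\eta^{0\cdot(2k+2-j)}\eta^{2j}=\sum_{j=0}^{2k+2}(-1)^j\frac{B_{2j}(y)}{(2j)!}\frac{B_{4k+4-2j}(y)}{(4k+4-2j)!}$, because $\eta^{2j}=i^{2j}=(-1)^j$ and the first factor contributes $1$. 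Collecting the prefactors: $-\dfrac{2^{n-1}(2\pi i)^{2p+1-n}}{\eta^{n(n-1)/2}}=-\dfrac{2\cdot(-i)(2\pi)^{4k+3}}{i}=-\dfrac{-2i(2\pi)^{4k+3}}{i}=2(2\pi)^{4k+3}$. So the right side of \eqref{3-5} equals $2(2\pi)^{4k+3}\sum_{j=0}^{2k+2}(-1)^j\frac{B_{2j}(y)}{(2j)!}\frac{B_{4k+4-2j}(y)}{(4k+4-2j)!}$. Equating with the simplified left side, the factor $2$ cancels, the sign flips from moving the minus from the $\cosh/\sinh$ computation, and we arrive at \eqref{3-7}.

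For the final sentence of the corollary, the cases $y=0$ and $y=1/2$ are immediate substitutions. When $y=1/2$: $\cos(m\pi)=(-1)^m$, and $\cosh(2m\pi(1/2-1/2))=\cosh 0=1$, so the left side of \eqref{3-7} becomes $\sum_{m\in\mathbb{Z}\setminus\{0\}}\frac{(-1)^m}{\sinh(m\pi)m^{4k+3}}$, while the right side is exactly the right side of \eqref{eq-1-2}; thus we recover \eqref{eq-1-2}. (This is also the $n=2$ case of Corollary \ref{C-3-1}.) When $y=0$: $\cos 0=1$ and $\cosh(2m\pi(-1/2))=\cosh(m\pi)$, so the summand is $\frac{\cosh(m\pi)}{\sinh(m\pi)m^{4k+3}}=\frac{\coth(m\pi)}{m^{4k+3}}$, and $B_{j}(0)$ are the usual Bernoulli numbers, so the right side matches \eqref{eq-1-3}; hence we recover \eqref{eq-1-3}. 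One should note that the hypothesis of Theorem \ref{P-3-1} at $y=0,1$ requires $p>n/2=1$, which holds since $p=2k+2\geq 2$; the case $y=1/2$ falls under $0<y<1$ so no restriction on $p$ is needed.

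The computation is entirely routine; the only point requiring a little care is the bookkeeping of powers of $i$ and the signs coming from $\cosh(\eta^{j}\cdot)$, $\sinh(\eta^{j}\cdot)$ with $\eta=i$, together with the constant $\eta^{n(n-1)/2}$ and the factor $\mathcal{Z}_{2p+1}$; I do not anticipate any genuine obstacle.
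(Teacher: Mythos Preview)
Your proposal is correct and follows exactly the approach indicated in the paper: specialize Theorem~\ref{P-3-1} with $n=2$, $\eta=i$, $p=2k+2$, compute $\mathcal{Z}_{4k+5}=2$, and track the powers of $i$ on both sides of \eqref{3-5}. The bookkeeping of signs and the verification of the hypothesis $p>n/2$ at $y=0$ are all handled correctly.
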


Next we consider the case $n=3$. Let $\eta=e^{\pi i/3}=-\rho^2$ with $\rho=e^{2\pi i/3}$ and $p=3k+3$. Then we have $\eta^{3(3-1)/2}=-1$ and $\mathcal{Z}_{6(k+1)+1}=3$ by \eqref{2-3}. From \eqref{3-5}, we have 
\begin{equation}
\begin{split}
&  \sum_{m\in \mathbb{Z}\setminus \{0\}}
\frac{\cos(2m\pi y)}{m^{6k+4}} \left(
\frac{\cosh\left( 2m\pi i(-\rho^2)(y-1/2)\right)\cosh\left( 2m\pi i\rho(y-1/2)\right)}{\sinh\left( m\pi i(-\rho^2)\right)\sinh\left( m\pi i\rho\right)}\right)
 \\
& \quad = -\frac{2^{2}(2\pi i)^{6k+4}}{-3}
\sum_{\substack{m_1,m_2,m_3 \geq 0 \\ m_1+m_2+m_3=3k+3}} \prod_{\nu=1}^{3} \frac{B_{2m_\nu}(y)}{(2m_\nu) !}\rho^{2(\nu-1)m_\nu}.
\end{split}
\label{3-8}
\end{equation}
Note that $\rho^2=1/\rho$ and $\rho=-1/\rho-1$. Hence, by using 
\begin{align*}
& \sinh(m\pi i \rho)=-(-1)^m\sinh(m\pi i/\rho),
\end{align*}
we can rewrite \eqref{3-8} as follows.

\begin{corollary} \label{C-3-3}
  For $k \in \mathbb{N}_0$ and $0\leq y\leq 1$,
\begin{equation}
\begin{split}
&  \sum_{m\in \mathbb{Z}\setminus \{0\}}
\frac{(-1)^m\cos(2m\pi y)}{m^{6k+4}} \left(
\frac{\cosh\left( (2m\pi i/\rho)(y-1/2)\right)\cosh\left( 2m\pi i\rho(y-1/2)\right)}{\sinh\left( m\pi i/\rho\right)^2}\right)
 \\
& \quad = \frac{4(-1)^{k}(2\pi)^{6k+4}}{3}
\sum_{\substack{m_1,m_2,m_3 \geq 0 \\ m_1+m_2+m_3=3k+3}} \frac{B_{2m_1}(y)}{(2m_1) !}\frac{B_{2m_2}(y)}{(2m_2) !}\frac{B_{2m_3}(y)}{(2m_3) !}\rho^{m_2+2m_3}.
\end{split}
\label{3-9}
\end{equation}
In particular when $y=0$ and $y=\frac{1}{2}$, the following equations hold:
\begin{align}
& \sum_{m\in \mathbb{Z}\setminus \{0\}}\frac{\coth(m\pi i/\rho)^2}{m^{6k+4}} \label{3-10}\\
& \quad = \frac{4(-1)^{k}(2\pi)^{6k+4}}{3}
\sum_{\substack{m_1,m_2,m_3 \geq 0 \\ m_1+m_2+m_3=3k+3}} \frac{B_{2m_1}(0)}{(2m_1) !}\frac{B_{2m_2}(0)}{(2m_2) !}\frac{B_{2m_3}(0)}{(2m_3) !}\rho^{m_2+2m_3}, \notag\\
& \sum_{m \in \mathbb{Z}\setminus \{0\}} \frac{1}{\sinh(m\pi i/\rho)^2m^{6k+4}}  \label{3-11}\\
& \quad = \frac{4(-1)^{k}(2\pi)^{6k+4}}{3}
\sum_{\substack{m_1,m_2,m_3 \geq 0 \\ m_1+m_2+m_3=3k+3}} \frac{B_{2m_1}(1/2)}{(2m_1) !}\frac{B_{2m_2}(1/2)}{(2m_2) !}\frac{B_{2m_3}(1/2)}{(2m_3) !}\rho^{m_2+2m_3}. \notag
\end{align}
\end{corollary}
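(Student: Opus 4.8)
The plan is to derive \eqref{3-9} — and then the special cases \eqref{3-10} and \eqref{3-11} — by a direct rewriting of the already-displayed formula \eqref{3-8}, which is itself just the case $n=3$, $\eta=e^{\pi i/3}=-\rho^2$, $p=3k+3$ of \eqref{3-5} in Theorem \ref{P-3-1} (legitimate for all $0\leq y\leq1$ because $p=3k+3>3/2=n/2$). The entire content is therefore the manipulation of the hyperbolic factors using the cube-root-of-unity relations. First I would record that $\rho^3=1$, so $\rho^2=1/\rho$, and $1+\rho+\rho^2=0$, so $\rho=-1/\rho-1$; hence $m\pi i\rho=-m\pi i/\rho-m\pi i$, and feeding this into the addition formulas for $\sinh$ and $\cosh$ together with $\cosh(m\pi i)=(-1)^m$, $\sinh(m\pi i)=0$ yields the two identities
\[
\sinh(m\pi i\rho)=-(-1)^m\sinh(m\pi i/\rho),\qquad
\cosh(m\pi i\rho)=(-1)^m\cosh(m\pi i/\rho).
\]
Using the first of these with $\sinh(-x)=-\sinh(x)$ and $\rho^2=1/\rho$, the denominator $\sinh(m\pi i(-\rho^2))\sinh(m\pi i\rho)$ of the summand of \eqref{3-8} becomes $(-1)^m\sinh(m\pi i/\rho)^2$, while evenness of $\cosh$ rewrites $\cosh(2m\pi i(-\rho^2)(y-1/2))$ as $\cosh((2m\pi i/\rho)(y-1/2))$; the reciprocal of the denominator thus supplies precisely the extra sign $(-1)^m$ that separates the left-hand side of \eqref{3-9} from that of \eqref{3-8}.

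On the right-hand side I would simplify the constant, $-2^2(2\pi i)^{6k+4}/(-3)=\tfrac{4}{3}(2\pi i)^{6k+4}$, and observe $i^{6k+4}=(i^6)^k\,i^4=(-1)^k$, so that $(2\pi i)^{6k+4}=(-1)^k(2\pi)^{6k+4}$ and the prefactor $4(-1)^k(2\pi)^{6k+4}/3$ emerges. The power of $\rho$ attached to the Bernoulli product is $\eta^{2(\nu-1)m_\nu}$, and since $\eta^2=\rho$ this equals $\rho^{(\nu-1)m_\nu}$, i.e.\ $\rho^{m_2+2m_3}$ after taking the product over $\nu=1,2,3$; because this product is symmetric in $m_2$ and $m_3$, exchanging $m_2\leftrightarrow m_3$ shows one may equivalently write $\rho^{2m_2+m_3}$, so no genuine ambiguity appears. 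Assembling these substitutions gives \eqref{3-9}.

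The two displayed special values are then immediate. For $y=\tfrac{1}{2}$ we have $y-\tfrac{1}{2}=0$, so both $\cosh$-factors equal $1$ and $\cos(m\pi)=(-1)^m$ cancels the leading $(-1)^m$, whence \eqref{3-9} collapses to \eqref{3-11}. For $y=0$ we have $\cos(2m\pi y)=1$ and $y-\tfrac{1}{2}=-\tfrac{1}{2}$, so the numerator is $\cosh(m\pi i/\rho)\cosh(m\pi i\rho)$; the second identity above turns this into $(-1)^m\cosh(m\pi i/\rho)^2$, the two factors $(-1)^m$ cancel, the summand reduces to $\coth(m\pi i/\rho)^2m^{-(6k+4)}$, and $B_{2m_\nu}(0)$ appears on the right, which is \eqref{3-10}. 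The only real difficulty is the careful bookkeeping of the various $(-1)^m$ and of the powers of $\rho$ and $i$, so that the two sides coincide exactly; once the two cube-root-of-unity identities are in hand, the rest is routine.
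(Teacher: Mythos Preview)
Your proof is correct and follows essentially the same route as the paper: specialize \eqref{3-5} to $n=3$, $\eta=e^{\pi i/3}=-\rho^2$, $p=3k+3$ to obtain \eqref{3-8}, then use the cube-root relations $\rho^2=1/\rho$, $\rho=-1/\rho-1$ and the resulting identity $\sinh(m\pi i\rho)=-(-1)^m\sinh(m\pi i/\rho)$ to rewrite it as \eqref{3-9}. Your write-up is in fact more explicit than the paper's (you also record the companion identity $\cosh(m\pi i\rho)=(-1)^m\cosh(m\pi i/\rho)$, which is what makes the $y=0$ specialization go through, and you spell out the simplification of the numerical prefactor and the $\rho$-exponent), but the argument is the same.
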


\begin{example} \label{Ex-3-4}
From \eqref{3-10} and \eqref{3-11}, we obtain
\begin{align} 
& \sum_{m\not=0}\frac{\coth(m\pi i/\rho)^2 }{m^4} =\frac{62}{2835}\pi^4, \label{3-12}\\
& \sum_{m\not=0}\frac{\coth(m\pi i/\rho)^2 }{m^{10}} =\frac{40247}{1915538625}\pi^{10}, \label{3-13}\\
& \sum_{m\not=0}\frac{1}{\sinh(m\pi i/\rho)^2 m^4} =-\frac{1}{2835}\pi^4, \label{3-14}\\
& \sum_{m\not=0}\frac{1}{\sinh(m\pi i/\rho)^2 m^{10}} =-\frac{703}{1915538625}\pi^{10}. \label{3-15}
\end{align}
Additionally, by setting $(p,n)=(4,4)$, $(5,5)$ in equation \eqref{3-6}, we obtain
\begin{align}
& \sum_{m\not=0}\frac{(-1)^m}{\sinh(m\pi)\sinh(m\pi i\zeta_8)\sinh(m\pi i\zeta_8^{-1})m^5} =\frac{1}{37800}\pi^5, \label{3-15-2}\\
& \sum_{m\not=0}\frac{(-1)^m}{\sinh(m\pi i\zeta_5)\sinh(m\pi i\zeta_5^2)\sinh(m\pi i\zeta_5^{3})\sinh(m\pi i\zeta_5^4)m^6} =-\frac{1}{467775}\pi^6, \label{3-15-3}
\end{align}
where $\zeta_k=e^{2\pi i/k}$ $(k\in \mathbb{N})$.
\end{example}

\begin{remark} \label{R-1}
By using the same method as introduced in this paper, 
we can recover the known formulas, for example, 
\begin{align*}
& \sum_{m=0}^\infty \frac{(-1)^{m}}{\cosh ((2m+1)\pi/2)((2m+1)/2)^{4k+1}} \\
& \quad\quad =\frac{(2\pi)^{4k+1}}{8}\sum_{j=0}^{2k}(-1)^j \frac{E_{2j}(1/2)}{(2j)!}\frac{E_{4k-2j}(1/2)}{(4k-2j)!},\\
& \sum_{m=0}^\infty \frac{(-1)^{m}}{\cosh ((2m+1)\sqrt{3}\pi/2)((2m+1)/2)^{6k+1}}\notag \\
& \quad\quad =\frac{(-1)^{k+1}(2\pi)^{6k+1}}{2}\sum_{j=0}^{3k} \frac{E_{2j+1}(0)}{(2j+1)!}\frac{B_{6k-2j}(0)}{(6k-2j)!}\cos\left(\frac{(2j+1)\pi}{3}\right)  
\end{align*}
for $k\in \mathbb{N}_{0}$ (see Watson \cite{Wa} and Berndt \cite{Be1}), where $\{ E_n(x)\}$ are the Euler polynomials defined by
\begin{align*}
& \frac{2e^{xt}}{e^t+1}=\sum_{n=0}^\infty E_n(x)\frac{t^n}{n!}.
\end{align*}
More generally, we can give relevant analogues of these results like those in Example \ref{Ex-3-4}.
\end{remark}

\ 

\noindent
{Y. Komori\\
Department of Mathematics, Rikkyo University, 3-34-1 Nishi-Ikebukuro, Toshima-ku, Tokyo 171-8501, Japan \\ email:\ {\rm komori@rikkyo.ac.jp} }

\smallskip

\noindent
{K. Matsumoto\\
Graduate School of Mathematics, Nagoya University, Chikusa-ku, Nagoya 464-8602 Japan \\ email:\ {\rm kohjimat@math.nagoya-u.ac.jp}}

\smallskip

\noindent
{H. Tsumura\\
Department of Mathematics and Information Sciences, Tokyo Metropolitan University,\\ 1-1, Minami-Ohsawa, Hachioji, Tokyo 192-0397, Japan \\ email:\ {\rm tsumura@tmu.ac.jp}}

\end{document}